\documentclass[11pt,a4paper,twoside]{article}
\usepackage[margin=25mm]{geometry}
\usepackage{amsmath,amssymb,amsthm}
\usepackage{enumitem,xcolor,hyperref}
\usepackage{fancyhdr}
\makeatletter
\AtBeginDocument{\let\runningauthor\@author}
\renewcommand{\section}{\@startsection{section}{1}{\z@}
  {-3.5ex \@plus -1ex \@minus -.2ex}
  {2.3ex \@plus .2ex}
  {\normalfont\normalsize\scshape\centering}}
\renewcommand{\@seccntformat}[1]{\csname the#1\endcsname.\quad}
\makeatother
\hypersetup{colorlinks=true,linkcolor=blue!45!black,citecolor=green!35!black,
urlcolor=blue!55!black,pdftitle={Index one in a half-space: the free boundary modifications}}
\setlist{nosep}
\numberwithin{equation}{section}
\newtheorem{theorem}{Theorem}[section]
\newtheorem{lemma}[theorem]{Lemma}
\newtheorem{proposition}[theorem]{Proposition}

\theoremstyle{definition}
\newtheorem*{definition}{Definition}
\theoremstyle{remark}
\newtheorem{remark}[theorem]{Remark}
\newcommand{\R}{\mathbb R}
\newcommand{\dd}{\mathrm d}
\newcommand{\Q}{\mathcal Q}
\newcommand{\BN}{\mathcal B_N}
\newcommand{\EE}{\mathcal E_{\mathrm{rel}}}
\newcommand{\ip}[2]{\langle #1,#2\rangle}
\DeclareMathOperator{\Ind}{Ind}
\DeclareMathOperator{\Span}{span}

\title{FREE BOUNDARY MINIMAL HYPERSURFACES OF MORSE INDEX ONE IN EUCLIDEAN HALF-SPACE}
\date{}
\author{Junzhang Li}
\begin{document}
\maketitle
\begin{abstract}
We prove that a complete, connected, two-sided, embedded free
boundary minimal hypersurface of finite total curvature and Morse index
one in a half-space of $\R^N$, with nonempty boundary is a higher-dimensional
half-catenoid. We also prove that a complete, connected, two-sided, embedded free
boundary minimal hypersurface of finite total curvature and Morse index
one in a quarter-space is the higher-dimensional quarter-catenoid.
\end{abstract}

\section{Introduction}\label{sec:setup}

Let $N=n+1\ge4$.  Write $x=(x_1,\ldots,x_N)$ for the standard
coordinates on $\R^N$, let $e_1,\ldots,e_N$ be the corresponding
orthonormal basis, and set
\[
 H=\{x\in\R^N:x_N\ge0\},\qquad
 P=\partial H=\{x_N=0\},\qquad z=x_N.
\]
We say that a minimal hypersurface $\Sigma^n\subset H$ has free boundary
on $P$ if $\partial\Sigma=\Sigma\cap P\ne\varnothing$ and $\Sigma$ meets
$P$ orthogonally along $\partial\Sigma$. For $f\in C_c^\infty(\Sigma)$, the second
variation of area is
\begin{equation}\label{eq:intro-Q}
\Q(f,f)=\int_\Sigma\bigl(|\nabla f|^2-|A|^2f^2\bigr)\,\dd\mu,
\end{equation}
where $A$ is the second fundamental form. Set
\[
 I_N(\Sigma):=\sup\bigl\{\dim U:U\subset C_c^\infty(\Sigma),\ 
 \Q(f,f)<0\ \text{for every }f\in U\setminus\{0\}\bigr\}
\]
and
\[
 I_D(\Sigma):=\sup\bigl\{\dim U:U\subset C_c^\infty(\Sigma^\circ),\ 
 \Q(f,f)<0\ \text{for every }f\in U\setminus\{0\}\bigr\},
\]
and call $I_N(\Sigma)$ and $I_D(\Sigma)$  Neumann Morse index and Dirichlet Morse index respectively.
The difference is that the latter one only allows variations fixing $\partial\Sigma$.
We say that $\Sigma$ has finite total curvature if
$\int_\Sigma|A|^n\,\dd\mu<\infty$.

Ambrozio--Buzano--Carlotto--Sharp \cite{ABCS} studied the Neumann and
Dirichlet indices of free boundary minimal hypersurfaces in a half-space
and their relation to the index of the reflected double. In $\R^3$,
they proved that an index-one half-bubble is isometric to a half-catenoid
\cite[Corollary 24]{ABCS}. Here a half-bubble is complete, connected,
properly embedded, has nonempty free boundary in a half-space, and has
finite total curvature. Shuli Chen \cite{Chen} subsequently sharpened
the index comparison using Dirichlet nullity and ruled out index-two
half-bubbles in $\R^3$.

Recently, Chodosh--Gianocca \cite{CG} proved that every complete, connected, embedded
minimal hypersurface of finite total curvature and Morse index one in
$\R^N$ is a higher-dimensional catenoid.  Their proof enlarges the usual space of
$L^2$ harmonic one-forms by allowing prescribed asymptotic behavior and
uses admissible pairings of ambient two-forms.  Building on that method,
we prove the corresponding free boundary classification in a half-space.

\begin{theorem}\label{thm:main}
Let $\Sigma^n\subset H$ be a complete, connected, two-sided, embedded
free boundary minimal hypersurface with finite total curvature.
If $I_N(\Sigma)=1$, then $\Sigma$ is a half-catenoid.
\end{theorem}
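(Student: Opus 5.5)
We plan to reduce to the Chodosh--Gianocca classification \cite{CG} by passing to the reflected double, but with index control strong enough to work in the free boundary setting. Let $\tilde\Sigma=\Sigma\cup R(\Sigma)$, where $R(x)=x-2x_Ne_N$ is the reflection across $P$. Because $\Sigma$ meets $P$ orthogonally, $\tilde\Sigma$ is a $C^{1,1}$, and then by elliptic regularity smooth, complete minimal hypersurface of $\R^N$. It is connected, two-sided (the normal extends by $\nu\mapsto dR(\nu)$), and has finite total curvature $2\int_\Sigma|A|^n$. Embeddedness of $\tilde\Sigma$ follows from embeddedness of $\Sigma$ together with $\Sigma\subset H$: interior points lie in $\{x_N>0\}$ and their reflections in $\{x_N<0\}$.

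The quadratic form of $\tilde\Sigma$ splits by the $R$-action into even and odd parts. Even functions correspond to the Neumann problem on $\Sigma$ and odd functions to the Dirichlet problem. This gives
\[
\Ind(\tilde\Sigma)=I_N(\Sigma)+I_D(\Sigma).
\]
Since $I_D\le I_N=1$, we get $\Ind(\tilde\Sigma)\le2$. If $I_D(\Sigma)=0$, then $\Ind(\tilde\Sigma)=1$, and \cite{CG} shows that $\tilde\Sigma$ is a catenoid. A catenoid that is symmetric under $R$ and meets $P$ has its axis contained in $P$, so $\Sigma$ is a half-catenoid.

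The main step, and the expected obstacle, is therefore to rule out $I_N(\Sigma)=I_D(\Sigma)=1$. For this I would rerun the Chodosh--Gianocca argument equivariantly, restricted to even test functions. In their method, a space $\mathcal H$ of harmonic one-forms with prescribed asymptotics on $\tilde\Sigma$, paired against ambient two-forms, produces test functions $\langle\omega,\iota_\nu\alpha\rangle$-type quantities. These span a space on which $\Q\le0$, of dimension larger than the index unless $\tilde\Sigma$ is a catenoid.

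The plan is to decompose $\mathcal H$ and the ambient two-forms $\Lambda^2\R^N$ into $R$-even and $R$-odd pieces. One then counts the admissible pairings whose resulting test functions are even, hence legitimate Neumann variations of $\Sigma$. Two-forms $e_i\wedge e_j$ with $i,j<N$ are $R$-even, and $e_i\wedge e_N$ are odd. Pairing them with the even and odd parts of the one-forms should yield at least two independent even nonpositive directions. This would give $I_N\ge2$, contradicting $I_N(\Sigma)=1$, unless equality holds in the estimates. The equality case would force the rigidity (all ends catenoidal with a common axis lying in $P$), which then gives the half-catenoid.

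The delicate points I expect are:
\begin{itemize}
\item checking that the boundary terms along $\partial\Sigma$ vanish for the chosen parities, using the orthogonality $\nu\perp e_N$ on $\partial\Sigma$;
\item counting the dimension of the even subspace of the enlarged harmonic one-forms. This is where the free boundary topology (components of $\partial\Sigma$, ends meeting $P$) enters, and where the argument must use $n\ge3$, i.e.\ $N\ge4$, as in \cite{CG}.
\end{itemize}
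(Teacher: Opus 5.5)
There is a genuine gap. The step that carries all the difficulty, excluding $I_N=I_D=1$, is left as an unspecified ``equivariant rerun'' of \cite{CG}, and the mechanism you sketch would not work as stated.

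First, no parity selection of pairings is needed. $I_N$ is defined on all of $C_c^\infty(\Sigma)$, so any $u_a=\ip{\nu\wedge\omega^\sharp}{\Theta_a}$ restricted to $\Sigma$ is already a legitimate Neumann variation. Moreover, restricting the $\Theta_a$ to either parity subspace of $\Lambda^2\R^N$ breaks the identity $\ip{P_\Omega(a\wedge b)}{a\wedge b}=|a\wedge b|^2$ on decomposables, on which the trace formula rests. For example, $e_1\wedge e_2$ has no odd part and $e_1\wedge e_N$ has no even part.

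What must actually be controlled is the boundary behaviour of the \emph{forms}. The paper uses relative exact harmonic forms $\omega=\dd F$ with $\Delta F=0$ and $F|_B=0$. Coclosedness, $\iota^*\omega=0$ and total geodesicity of $B$ give $\partial_\eta|\omega|^2=0$. The $\Omega$-term pulls back to zero, and $\partial_\eta Z=0$ follows from $A\eta=\kappa\eta$.

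Second, exhibiting two independent nonpositive directions does not contradict $I_N=1$. The argument instead uses the map $T$ and \cite[Propositions 6.7--6.8]{CG} to choose a nonzero $\omega$ whose corrected test functions are $\Q$-orthogonal to the negative direction $\psi$. Then Lemma~\ref{lem:equality} gives $\Q(\widetilde u_a,\widetilde u_a)\ge0$, against the trace identity. The equality case must then be analysed, and you offer nothing in place of the nullity argument of \cite{CG} there. The paper replaces it with Lemma~\ref{lem:rigidity}: the Neumann conditions $\partial_\eta u_a=0$ give $|\nabla^B b|^2=0$, and boundary unique continuation then gives $F=\lambda z$. Combined with $\dim\EE=k+n$ (one more than in \cite{CG}, since $\sum_\alpha F_\alpha$ survives), this forces $k=1$, and Schoen's two-ended theorem finishes.

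You also miss the dichotomy that organizes the proof: the common asymptotic hyperplane of the double is either orthogonal or parallel to $P$.

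In the orthogonal case there is a short route to your desired $I_D=0$. The odd Jacobi field $v=\ip{\nu}{e_N}$ satisfies $v=O(r^{1-n})$, so $v\in\BN$. It has $v|_B=0$, $\Q(v,v)=0$, and $\Q(v,w)=0$ for any Dirichlet negative direction $w$. Lemma~\ref{lem:equality} then gives $\partial_\eta v=0$, and Lemma~\ref{lem:boundary-uc} gives $v\equiv0$, a contradiction. This route also sidesteps constructing harmonic forms on $\Sigma$ with noncompact boundary, which is what your plan would face there.

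In the parallel case $\ip{\nu}{e_N}\to\pm1$ on the ends, and the paper finds no decaying odd Jacobi field. It never proves $I_D=0$ there; it classifies directly by the Neumann argument above.

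Relatedly, your claim that an $R$-symmetric catenoid meeting $P$ has its axis in $P$ is false: $P$ can be the waist hyperplane. That waist half-catenoid, with one end parallel to $P$, is exactly the model in the hard parallel case. So the equality configuration you anticipate (``common axis lying in $P$'') is the wrong one there.
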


Here a half-catenoid is obtained by cutting a higher-dimensional catenoid
along its waist hyperplane or a hyperplane containing its axis.
Both cuts have free boundary and index one. Indeed, their double is
the full catenoid $C$, which has index one by \cite{TZ}.
By \eqref{eq:parity},
\[
1=\Ind(C)=I_N(\Sigma)+I_D(\Sigma),\qquad
0\le I_D(\Sigma)\le I_N(\Sigma).
\]
As the indices are integers, $I_N(\Sigma)=1$ and $I_D(\Sigma)=0$.
Thus the theorem extends
the classification in \cite[Corollary 24]{ABCS} to higher dimensions
under the finite-total-curvature assumption.

\begin{remark}
While this manuscript was being completed, I was made aware of the
independent work of Chenghang Lu \cite{Lu}, who obtained an index-one
classification for capillary minimal hypersurfaces in a half-space
under Euclidean volume growth and certain dimension and contact-angle
restrictions. In particular, \cite[Theorem 2]{Lu} gives the same
half-catenoid classification in the free boundary case for $3\le n\le6$. My proof is obtained simultaneously
and independently
\end{remark}

We briefly describe the proof. Regularity at infinity, embeddedness,
and reflection symmetry imply that the common asymptotic hyperplane of
the ends of the double is either parallel or orthogonal to $P$. In the
orthogonal case, the normal
component of translation perpendicular to $P$ is a decaying Jacobi
field. It can therefore be used in the index form to prove $I_D=0$.
The double then has index one, and \cite[Theorem 1.1]{CG} applies.

In the parallel case, we could not find such a decaying Jacobi field, so we adapt the harmonic 1-form argument of
\cite{CG} to the free boundary setting. If $\Sigma$ has $k$ ends, we
construct a $(k+n)$-dimensional space of exact relative harmonic forms.
The boundary terms in the trace identity vanish, while equality in the
index estimate gives additional Neumann equations. These equations
characterize the equality space as the span of $\dd z$, replacing the 
nullity argument in \cite[Section 8]{CG} by considering the behavior of harmonic forms at infinity. 
The resulting dimension argument forces $k=1$. Then the double has two ends and
is a catenoid by Schoen's theorem \cite[Theorem 3]{Schoen}.

Using the same observation as in the orthogonal case, we can also obtain a similiar classification result in a quarter-space of $R^N$.
For $N=3$, Chen \cite[Proposition 3.1]{Chen} classified quarter-bubbles with index zero and one.
\paragraph{Organization.}
Section~\ref{sec:forms} collects the preliminaries used in the paper.
Section~\ref{sec:orthogonal} treats orthogonal ends. In
Section~\ref{sec:relative} we construct the relative harmonic forms
and compute their dimensions. Section~\ref{sec:trace} establishes the
trace identity and its equality conditions. 
Section~\ref{sec:rigidity} proves the boundary rigidity and completes
the proof of Theorem~\ref{thm:main}. Finally, Section~\ref{sec:quarter} deals with free boundary minimal hypersurfaces in a quarter of $\R^N$.

\paragraph{Use of AI.}
These results were obtained by combining ideas of the author
with a publicly available LLM. The author directed this use, selected and substantially revised any generated text, and independently verified the mathematical statements and proofs. The author takes full responsibility for the final content of the paper.

\paragraph{Acknowledgement}
The author is grateful to his advisor Qiongling Li for advising and encouragemnt. The author would also like to thank Otis Chodosh for suggesting considering the case of the quarter-space, and Yiyang Xiao for helpful discussions
\section{Conventions and preliminaries}\label{sec:forms}

Throughout, $\Sigma$ satisfies the hypotheses of Theorem~\ref{thm:main}.
We use $D_X\nu=-AX$, $\Delta=\operatorname{div}\nabla$, and
$L=\Delta+|A|^2$. Write $B=\partial\Sigma$ and let $\eta$ be its
outward conormal.

Reflection of $\Sigma$ across $P$ gives a smooth minimal double $M$ without boundary, which is smooth by standard elliptic
theory. The double is clearly complete, embedded, two-sided and of finite total curvature. 
We recall the definition of regularity at infinity.
\begin{definition}
A complete minimal hypersurface $M^n\subset\R^{n+1}$, $n\ge3$,
is \emph{regular at infinity} if there is a compact set $K\subset M$
such that
\[
M\setminus K=E_1\sqcup\cdots\sqcup E_s,
\]
where each $E_\alpha$ is a connected graph of a function $v_\alpha$
over the exterior of a bounded region in a hyperplane $\Pi_\alpha$.
In orthonormal coordinates $y=(y_1,\ldots,y_n)$ on $\Pi_\alpha$,
there are constants $h_\alpha,a_\alpha,c_{\alpha,j}\in\R$ such that
\[
D^\ell\left(v_\alpha(y)-h_\alpha-a_\alpha|y|^{2-n}
-\sum_{j=1}^n c_{\alpha,j}y_j|y|^{-n}\right)
=O(|y|^{-n-\ell})
\]
as $|y|\to\infty$, for every integer $\ell\ge0$.
Here $D$ denotes Euclidean differentiation on $\Pi_\alpha$.
The sets $E_\alpha$ represent the ends of $M$; enlarging $K$
only truncates these representatives farther out.
\end{definition}
By the results of Anderson and Schoen \cite{Anderson,Schoen}, the double
$M$ is regular at infinity; see \cite[Theorem A.4]{CG} for the above
formulation and derivative estimates.
Embeddedness makes all end hyperplanes parallel; denote their common direction by $\Pi$.
Reflection preserves the normal line $\Pi^\perp$. Its invariant
lines lie in $P$ or in $\R e_N$, and therefore all ends are either orthogonal or parallel to $P$, denoted as
\begin{equation}\label{eq:end-dichotomy}
\Pi\perp P\qquad\text{or}\qquad\Pi\parallel P.
\end{equation}

Along $B$, the free boundary condition gives $\eta=-e_N$. Taking
components of $D_X\eta=0$ yields
\begin{equation}\label{eq:geometry}
\nabla_X\eta=0,\qquad A(X,\eta)=0\quad(X\in TB),\qquad
A\eta=\kappa\eta,\qquad \kappa=A(\eta,\eta).
\end{equation}
Here we abuse notation by using $A$ to represent both the second fundamental form and the shape operator.
In particular $B$ is totally geodesic in $\Sigma$, and on $B$
\begin{equation}\label{eq:height-boundary}
z=0,\qquad \nabla z=-\eta,\qquad \partial_\eta z=-1.
\end{equation}

The supporting hyperplane $P$ is totally geodesic in $\R^N$, so the free boundary term
$-\int_B\mathrm{II}_P(\nu,\nu)fg$ vanishes, and
\[
\Q(f,g)=\int_\Sigma
\bigl(\ip{\nabla f}{\nabla g}-|A|^2fg\bigr).
\]
Following the energy-space construction in \cite[Section 3.2]{CG}, set
\[
\|f\|_{\mathrm{en}}^2=\int_\Sigma(|\nabla f|^2+|A|^2f^2),\qquad
\BN=\overline{C_c^\infty(\Sigma)}^{\|\cdot\|_{\mathrm{en}}}.
\] Nonflatness gives local $W^{1,2}$
control by the energy norm, i.e. $\BN\subset W^{1,2}_{loc}(\Sigma)$.
Moreover, $|\Q(f,g)|\le\|f\|_{\mathrm{en}}\|g\|_{\mathrm{en}}$;
hence $\Q$ extends continuously to $\BN$, and density of $C^\infty_c(\Sigma)$ in $\BN$ shows that the index remains $I_N$ with respect to functions in $\BN$.
For finite-energy functions not necessarily in $\BN$, we denote the
same integral by $\Q_\infty$.

The index comparison and reflection decomposition of
\cite[Lemmas 16, 18, 20]{ABCS} give
\begin{equation}\label{eq:parity}
I_D(\Sigma)\le I_N(\Sigma),\qquad
\Ind(M)=I_N(\Sigma)+I_D(\Sigma).
\end{equation}
Applying these statements to higher dimensions requires $M$ to have Euclidean volume growth, which follows from regularity at infinity.

\begin{lemma}\label{lem:energy-membership}
If $f\in C^\infty(\Sigma)$ satisfies $f=O(r^{2-n})$ and
$\nabla f=O(r^{1-n})$, then $f\in\BN$.
\end{lemma}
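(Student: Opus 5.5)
The natural approach is to approximate $f$ by cutoffs $f_R=\varphi_R f$. Here $\varphi_R=\varphi(r/R)$, with $r$ the extrinsic distance $|x|$ restricted to $\Sigma$ and $\varphi$ a fixed smooth function equal to $1$ on $[0,1]$ and $0$ on $[2,\infty)$. Since $\Sigma$ is properly embedded (regular at infinity), each $f_R$ is smooth with compact support in $\Sigma$. Compact support in $\Sigma$, rather than in $\Sigma^\circ$, is all that $\BN$ requires, since no boundary condition is imposed. By definition of $\BN$ as the $\|\cdot\|_{\mathrm{en}}$-closure of $C_c^\infty(\Sigma)$, it then suffices to show $\|f-f_R\|_{\mathrm{en}}\to0$ as $R\to\infty$.

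First I verify that $f$ itself has finite energy, using regularity at infinity of the double $M$. Outside a compact set, each end of $M$, and hence each piece of $\Sigma$, is a graph over the exterior of a ball in $\Pi$ with bounded slope. So the area of $\Sigma\cap\{R\le r\le 2R\}$ is $O(R^n)$, and the area growth is Euclidean. The expansion of $v_\alpha$ gives $D^2v_\alpha=O(|y|^{-n})$, so $|A|=O(r^{-n})$. Then $|\nabla f|^2=O(r^{2-2n})$ and $|A|^2f^2=O(r^{-2n}r^{4-2n})$. Summing over dyadic shells, $\int_{\{r\ge R\}}|\nabla f|^2\lesssim\sum_k (2^kR)^{2-2n}(2^kR)^n=O(R^{2-n})$, which tends to $0$ because $n\ge3$. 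The $|A|^2f^2$ tail decays even faster.

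Next I estimate $f-f_R=(1-\varphi_R)f$, which is supported in $\{r\ge R\}$:
\[
\|f-f_R\|_{\mathrm{en}}^2\le 2\int_{\{r\ge R\}}\bigl(|\nabla f|^2+|A|^2f^2\bigr)+2\int_{\{R\le r\le2R\}}|\nabla\varphi_R|^2f^2 .
\]
The first term is the tail just shown to vanish. For the cutoff term, $|\nabla\varphi_R|\le C/R$ because $|\nabla^\Sigma r|\le1$. This gives a bound $CR^{-2}\cdot R^{4-2n}\cdot R^n=CR^{2-n}\to0$. Hence $f_R\to f$ in energy norm, and $f\in\BN$.

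The main obstacle is the geometric input: Euclidean area growth of extrinsic shells and the decay $|A|=O(r^{-n})$ on $\Sigma$. Both come from the graphical description of the ends of $M$ in the definition of regularity at infinity, restricted to $\Sigma\subset M$. That restriction is harmless because $\Sigma\cap\{r\ge R\}\subset M\cap\{r\ge R\}$. One point needs care: the energy norm contains no $L^2$ term. So the argument must not rely on $f\in L^2$, which fails for $n\le4$. The estimate above avoids this, since it only uses $f^2$ weighted by $R^{-2}$ on a single annulus and by $|A|^2$.
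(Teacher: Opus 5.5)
Your proposal is correct and takes the same approach as the paper. The paper also multiplies $f$ by an ambient radial cutoff $\chi_R$ (your $\varphi(r/R)$), uses properness to get $\chi_R f\in C_c^\infty(\Sigma)$, and uses $|A|=O(r^{-n})$, Euclidean area growth of the ends of the double, and the decay of $f$ to show that $\|(1-\chi_R)f\|_{\mathrm{en}}\to0$. You simply write out the dyadic-shell and cutoff-gradient estimates that the paper leaves implicit.
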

\begin{proof}
As in \cite[Lemma 3.6]{CG}, choose a smooth ambient cutoff $\chi_R$
equal to one on $B_R(0)$ and zero outside $B_{2R}(0)$, with
$|\nabla\chi_R|\le C/R$. The assumed decay of $f$, $|A|=O(r^{-n})$,
and Euclidean volume growth give
\[
\|f\|_{\mathrm{en}}<\infty,\qquad
\|(1-\chi_R)f\|_{\mathrm{en}}\longrightarrow0.
\]
Properness gives $\chi_Rf\in C_c^\infty(\Sigma)$. 
Since $\chi_Rf\to f$, $f\in\BN$. The boundary
does not affect this argument.
\end{proof}

\begin{lemma}\label{lem:equality}
Fix $\psi\in C_c^\infty(\Sigma)$ with $\Q(\psi,\psi)<0$.
If $f\in\BN$ and $\Q(f,\psi)=0$, then $\Q(f,f)\ge0$.
Equality holds if and only if
\begin{equation}\label{eq:neumann-kernel}
Lf=0\text{ in }\Sigma^\circ,\qquad \partial_\eta f=0\text{ on }B.
\end{equation}
\end{lemma}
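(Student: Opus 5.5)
The plan is to use the hypothesis $I_N(\Sigma)=1$ together with the density of $C_c^\infty(\Sigma)$ in $\BN$; the index is also $1$ on $\BN$. First, inequality. Suppose $f\in\BN$, $\Q(f,\psi)=0$ and $\Q(f,f)<0$. Then $f$ and $\psi$ are linearly independent: if $f=c\psi$ with $c\ne0$, then $0=\Q(f,\psi)=c\,\Q(\psi,\psi)\ne0$, and $c=0$ contradicts $\Q(f,f)<0$. For $a,b$ not both zero,
\[
\Q(af+b\psi,af+b\psi)=a^2\Q(f,f)+b^2\Q(\psi,\psi)<0,
\]
because the cross term vanishes. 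So $\Span\{f,\psi\}$ is a two-dimensional negative subspace of $\BN$. Approximating $f$ in $\|\cdot\|_{\mathrm{en}}$ by $f_j\in C_c^\infty(\Sigma)$ and using continuity of $\Q$, the subspace $\Span\{f_j,\psi\}$ is still negative definite for large $j$. This is a compactness argument on the unit circle of coefficients $(a,b)$, using that $\Q$ is continuous and $\Span\{f,\psi\}$ is finite-dimensional. That contradicts $I_N=1$, so $\Q(f,f)\ge0$.

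Next, equality. Suppose $\Q(f,f)=0$. Then $f$ minimizes $g\mapsto\Q(g,g)$ over the closed subspace $W=\{g\in\BN:\Q(g,\psi)=0\}$, where $\Q\ge0$. Since $\Q$ is positive semidefinite on $W$, Cauchy--Schwarz for semidefinite forms gives $\Q(f,g)=0$ for all $g\in W$. For arbitrary $\phi\in C_c^\infty(\Sigma)$, write
\[
\phi=g+t\psi,\qquad t=\Q(\phi,\psi)/\Q(\psi,\psi),\qquad g\in W.
\]
Then $\Q(f,\phi)=t\,\Q(f,\psi)=0$. So $f$ is a weak solution of $\int_\Sigma\bigl(\ip{\nabla f}{\nabla\phi}-|A|^2f\phi\bigr)=0$ for all $\phi\in C_c^\infty(\Sigma)$, including those not vanishing on $B$. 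Because $f\in W^{1,2}_{loc}$, interior elliptic regularity and boundary regularity for the Neumann problem give smoothness up to $B$. Here $B$ is smooth and $|A|^2$ is smooth up to the boundary. Integrating by parts then yields $Lf=0$ in $\Sigma^\circ$ from interior test functions, and $\partial_\eta f=0$ on $B$ from the boundary term $\int_B\phi\,\partial_\eta f$.

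For the converse, suppose $f\in\BN$ satisfies \eqref{eq:neumann-kernel}. Then $\Q(f,\phi)=0$ for every $\phi\in C_c^\infty(\Sigma)$ by the same integration by parts. Density of $C_c^\infty(\Sigma)$ in $\BN$ and continuity of $\Q$ extend this to all $\phi\in\BN$, so $\Q(f,f)=0$; the hypothesis $\Q(f,\psi)=0$ is then automatic.

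The main technical point I expect is the regularity step. We need a weak Neumann solution in $W^{1,2}_{loc}$ up to the boundary to be smooth there. The cleanest route is to reflect: extend $f$ evenly across $P$ to the double $M$. The even extension is then a weak solution of $Lf=0$ on $M$, because the Neumann condition is exactly what makes even reflection work, and $A$ extends smoothly. Interior regularity on $M$ then finishes the argument.
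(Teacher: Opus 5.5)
Your proposal is correct and follows essentially the same route as the paper. Both arguments use the two-dimensional negative subspace $\Span\{f,\psi\}$ (you spell out the approximation by $C_c^\infty$ functions, while the paper only cites density), and the projection $\phi-\frac{\Q(\phi,\psi)}{\Q(\psi,\psi)}\psi$ into $W$ to get $\Q(f,\cdot)\equiv0$ on $\BN$. The regularity and the Neumann condition $\partial_\eta f=0$ then come, in both, from extending $f$ evenly to the double.
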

\begin{proof}
Set $W=\{h\in\BN:\Q(h,\psi)=0\}$. Suppose that some $f\in W$
satisfies $\Q(f,f)<0$. Then $f$ and $\psi$ are linearly independent:
if $af+b\psi=0$ in $\BN$, pairing with $\psi$ gives
$b\Q(\psi,\psi)=0$, hence $b=0$; since $f\ne0$, also $a=0$.
This is a two-dimensional negative definite subspace of $\BN$.
Density of $C_c^\infty(\Sigma)$ in $\BN$ implies that Q
has Neumann index 1 with respect to functions in $\BN$, a contradiction.
Thus $\Q\ge0$ on $W$.

Now let $f\in W$ and $\Q(f,f)=0$. For every $h\in W$ and $t\in\R$,
\[
0\le\Q(f+th,f+th)=2t\Q(f,h)+t^2\Q(h,h),
\]
so $\Q(f,h)=0$. For any $g\in\BN$, the function
\[
h=g-\frac{\Q(g,\psi)}{\Q(\psi,\psi)}\psi
\]
belongs to $W$, and consequently $\Q(f,g)=0$.
Let $\rho$ be the reflection on the double $M$, and extend $f$ evenly
to $\widehat f\in H^1_{\mathrm{loc}}(M)$. For every
$\phi\in C_c^\infty(M)$, reflection gives
\[
\int_M\bigl(\langle\nabla\widehat f,\nabla\phi\rangle
-|A_M|^2\widehat f\phi\bigr)
=\Q\bigl(f,(\phi+\phi\circ\rho)|_\Sigma\bigr)=0,
\]
where $A_M$ is the second fundamental form of $M$.
Thus $\widehat f$ weakly solves $(\Delta_M+|A_M|^2)\widehat f=0$.
By standard elliptic regularity, $\widehat f$ is smooth.
Restricting to $\Sigma$ gives $Lf=0$, and evenness gives
$\partial_\eta f=0$ on $B$.
Conversely, the two equations in
\eqref{eq:neumann-kernel} imply $\Q(f,g)=0$ for every compactly
supported smooth $g$, and density gives $\Q(f,f)=0$.
\end{proof}

\begin{lemma}[Boundary unique continuation]\label{lem:boundary-uc}
Let $V$ be a smooth function on $\Sigma$ and admit a smooth extension to its
double. If $w\in C^\infty(\Sigma)$ satisfies
\[
(\Delta+V)w=0\quad\text{in }\Sigma^\circ,\qquad
w=\partial_\eta w=0\quad\text{on a nonempty open subset of }B,
\]
then $w\equiv0$. In particular this holds for $V=0$ and $V=|A|^2$.
\end{lemma}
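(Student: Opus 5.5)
We reduce the boundary problem to interior unique continuation on the double $M$. Let $U\subset B$ be the nonempty open set on which $w=\partial_\eta w=0$, and let $\rho$ be the reflection of $M$ across $P$. We extend $w$ oddly across $B$, setting $\widehat w=w$ on $\Sigma$ and $\widehat w=-w\circ\rho$ on $\rho(\Sigma)$. By hypothesis $V$ admits a smooth extension to $M$. For $V=0$ and for $V=|A|^2$ we may take this extension to be even, since $|A_M|^2$ is $\rho$-invariant. In general we do not need evenness, because we only work on the open set $U$.

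The plan is to work near a point $p\in U$ in Fermi coordinates $(y,t)$ adapted to $B$ in $M$, where $t$ is the signed distance to $B$ and $\rho$ is $t\mapsto-t$. This is legitimate because $B$ is totally geodesic by \eqref{eq:geometry}. Since $w$ satisfies the Cauchy data $w=\partial_t w=0$ on $U\times\{0\}$, the odd extension is $C^1$ across $t=0$. Moreover, the equation lets us express $\partial_t^2 w$ on $t=0$ in terms of tangential derivatives of $w$, $\partial_t w$, and $Vw$, all of which vanish on $U$. Hence $\widehat w$ is $C^{1,1}$ near $U$, so it lies in $W^{2,\infty}_{\mathrm{loc}}$ and satisfies $\Delta_M\widehat w+\widetilde V\widehat w=0$ almost everywhere on a neighborhood $\Omega$ of $p$ in $M$. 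Here $\widetilde V=V$ for $t\ge0$ and $\widetilde V=V\circ\rho$ for $t<0$, which is bounded.

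A cleaner alternative avoids matching derivatives altogether. We extend $w$ by zero, setting $\widetilde w=w$ on $\Sigma\cap\Omega$ and $\widetilde w=0$ on the reflected side. The zero Cauchy data imply that $\widetilde w\in H^1_{\mathrm{loc}}(\Omega)$ and that, for $\phi\in C_c^\infty(\Omega)$,
\[
\int_\Omega\bigl(\ip{\nabla\widetilde w}{\nabla\phi}-V\widetilde w\phi\bigr)=-\int_{B\cap\Omega}\partial_\eta w\,\phi=0.
\]
So $\widetilde w$ is a weak solution of $(\Delta_M+V)\widetilde w=0$ in $\Omega$, with $V$ the smooth extension, and it vanishes on the open set $\Omega\cap\rho(\Sigma^\circ)$. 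I would use this zero-extension version as the main argument, since it uses only the smoothness of the extension of $V$. The integration by parts is justified because $w$ is smooth up to $B$.

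Next we apply the weak unique continuation theorem for second-order elliptic equations with smooth (Lipschitz) principal part and bounded potential, as in Aronszajn or Garofalo--Lin. This gives $\widetilde w\equiv0$ on $\Omega$, assuming $\Omega$ is connected, for instance a small coordinate ball. Hence $w$ vanishes on the open set $\Omega\cap\Sigma^\circ$. Since $\Sigma^\circ$ is connected and $w$ solves $(\Delta+V)w=0$ there, interior unique continuation gives $w\equiv0$ on $\Sigma^\circ$, and then on $\Sigma$ by continuity.

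The main obstacle is making sure the weak formulation is correct across $B$: the boundary term must be exactly $\int_{B}\partial_\eta w\,\phi$, and its vanishing must use both Cauchy conditions. The condition $w=0$ gives the $H^1$ gluing, since there is no jump in trace. The condition $\partial_\eta w=0$ kills the flux term. Once that is checked, the remaining steps are standard applications of unique continuation.
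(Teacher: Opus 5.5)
Your argument is correct, but it gets across $B$ by a different mechanism than the paper does.

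\textbf{The paper's route.} It works in boundary normal coordinates and differentiates the equation $\partial_t^2w+a\,\partial_tw+L_tw+Vw=0$ repeatedly in $t$. By induction, all derivatives $\partial_y^\alpha\partial_t^jw$ vanish on $U$. The extension of $w$ by zero is then a smooth classical solution on a full neighborhood of $p$ in the double. It vanishes to infinite order at the interior point $p$, and Aronszajn's strong unique continuation finishes.

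\textbf{Your route.} You use only the first-order Cauchy data. The condition $w=0$ on $U$ makes the zero extension $\widetilde w$ Lipschitz, hence $H^1$. The condition $\partial_\eta w=0$ on $U$ kills the flux term in Green's formula. So $\widetilde w$ is a weak solution on $\Omega$ that vanishes on an open set. Weak unique continuation then applies, or equivalently elliptic regularity (which makes $\widetilde w$ smooth) followed by strong unique continuation at a point of the boundary of its zero set.

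\textbf{Comparison.} Your version avoids the induction on normal derivatives entirely. It never differentiates $V$ or the metric coefficients in $t$, so it would work for merely bounded $V$ near $B$ and for $w$ that is only $C^1$ up to $B$. The paper's version needs smoothness up to $B$ to run the induction. In exchange it proves the stronger intermediate fact of infinite-order vanishing along $U$, and then needs only the pointwise form of unique continuation at $p$.

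\textbf{Small points.}
\begin{itemize}
\item Choose $\Omega$ connected with $\Omega\cap B\subset U$, so that both Cauchy conditions hold on all of $B\cap\Omega$.
\item With $\eta$ the outward conormal, the boundary term in your weak formulation is $+\int_{B\cap\Omega}\phi\,\partial_\eta w$, not $-$. This is harmless, since the term vanishes.
\item The odd-extension paragraph can be dropped once you adopt the zero extension.
\end{itemize}
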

\begin{proof}
Let $U\subset B$ be the nonempty open set on which both Cauchy data vanish, and fix $p\in U$. On the smooth double $M$,
choose boundary normal coordinates $(y,t)$ in an open neighborhood
of $p$, with $B=\{t=0\}$ and $\Sigma=\{t\ge0\}$, so
\[
g=\dd t^2+g_{ij}(y,t)\,\dd y^i\dd y^j.
\]
Set $G=\det(g_{ij})$ then
\[
\begin{aligned}
\Delta_gw
&=\frac1{\sqrt G}\partial_t(\sqrt G\,\partial_tw)
+\frac1{\sqrt G}\partial_{y^i}
  (\sqrt G\,g^{ij}\partial_{y^j}w)\\
&=\partial_t^2w+a(y,t)\partial_tw+L_tw,
\end{aligned}
\]
where
\[
a=\partial_t\log\sqrt G,\qquad
L_tw=\frac1{\sqrt G}\partial_{y^i}
  (\sqrt G\,g^{ij}\partial_{y^j}w).
\]
The doubled metric and the smooth extension of $V$ are smooth, and hence
the equation becomes
\[
\partial_t^2w+a(y,t)\partial_tw+L_tw+V(y,t)w=0.
\]
Since $\eta=-\partial_t$
on $B$, the Cauchy data give $w(y,0)=\partial_tw(y,0)=0$.
Consequently all their tangential derivatives vanish as well.

Suppose $\partial_t^jw(y,0)=0$ for $0\le j\le r+1$.
Differentiate the equation $r$ times in $t$ and restrict to $t=0$.
Apart from $\partial_t^{r+2}w$, every term involves a tangential
derivative of $\partial_t^jw$ with $j\le r+1$, and hence vanishes.
Thus $\partial_t^{r+2}w(y,0)=0$. Induction gives
\[
\partial_y^\alpha\partial_t^jw(y,0)=0
\qquad\text{for all }\alpha\text{ and }j\ge0.
\]
Extend $w$ by zero to $t<0$. The vanishing of all boundary derivatives
makes this extension smooth; it solves the extended equation on the
whole neighborhood, including $t=0$, and vanishes to infinite order
at the interior point $p$ of $M$. Strong unique continuation
\cite{Aronszajn} therefore makes it zero near $p$. Unique continuation
on the connected interior of $\Sigma$ gives $w\equiv0$ on $\Sigma$.
\end{proof}

\section{Orthogonal ends}\label{sec:orthogonal}

Assume $\Pi\perp P$. We first recall the end estimates from
\cite[Section 3.1, Lemmas 3.3--3.4]{CG}. An end of $M$ is the graph
of a function $u_\alpha$ over $\Pi$, with
\[
Du_\alpha=O(r^{1-n}),\qquad |A|=O(r^{-n}),\qquad r=|x|.
\]
If $n_\Pi$ is a unit normal to $\Pi$, the exact graph formula gives
\[
\nu=\sigma_\alpha\frac{n_\Pi-Du_\alpha}{\sqrt{1+|Du_\alpha|^2}}
=\sigma_\alpha(n_\Pi-Du_\alpha)+O(r^{2-2n}),
\qquad \sigma_\alpha\in\{\pm1\}.
\]
Here $e_N\in\Pi$, so the translation Jacobi field
$v:=\ip{\nu}{e_N}$ satisfies
\begin{equation}\label{eq:translation-decay}
v=-\sigma_\alpha\frac{\ip{Du_\alpha}{e_N}}{\sqrt{1+|Du_\alpha|^2}}
=O(r^{1-n}),\qquad
\nabla v=-A(e_N^\top)=O(r^{-n}).
\end{equation}
Moreover, $Lv=0$ since translation preserves minimality, and the free boundary condition gives $v|_B=0$.

\begin{proposition}[Orthogonal ends]\label{prop:orthogonal}
If $\Pi\perp P$ and $I_N=1$, then $I_D=0$ and the double is a catenoid.
\end{proposition}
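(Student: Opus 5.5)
The plan is to use the translation Jacobi field $v=\ip{\nu}{e_N}$ as a test function, showing that $\Q$ is positive semidefinite on Dirichlet variations. Then $I_D=0$, hence $\Ind(M)=1$ by \eqref{eq:parity}, and \cite[Theorem 1.1]{CG} identifies $M$ as a catenoid.

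\textbf{Step 1: $v$ is a null direction of $\Q$.} By \eqref{eq:translation-decay}, $v=O(r^{1-n})$ and $\nabla v=O(r^{-n})$. This decay is one power better than the hypotheses of Lemma~\ref{lem:energy-membership}, so $v\in\BN$; more precisely, $v$ is an energy-limit of $\chi_Rv$, and since $v|_B=0$ these cutoffs vanish on $B$. Hence $v$ lies in the Dirichlet energy space $\mathcal B_D$, the closure of $C_c^\infty(\Sigma^\circ)$ functions, up to a harmless approximation near $B$. For any $g\in C_c^\infty(\Sigma)$, integrating by parts and using $Lv=0$ gives
\[
\Q(v,g)=-\int_\Sigma gLv+\int_B g\,\partial_\eta v=\int_B g\,\partial_\eta v .
\]
In particular $\Q(v,g)=0$ whenever $g$ vanishes on $B$, so $\Q(v,\cdot)=0$ on $\mathcal B_D$ by density. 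Also $v\not\equiv0$, since otherwise $\Sigma$ would be invariant under translation in $e_N$, which is incompatible with $\partial\Sigma\neq\varnothing$ and $\Sigma\subset H$.

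\textbf{Step 2: the sign change of $v$ forces $I_D=0$.} Suppose $I_D\ge1$. The approach is to produce a first Dirichlet eigenfunction $\phi>0$ on $\Sigma^\circ$ with $L\phi+\lambda\phi=0$ for some $\lambda>0$ and $\phi|_B=0$. One can do this either on large truncated domains $\Sigma\cap B_R$, where the Dirichlet index is already $\ge1$ for $R$ large, or by minimizing over $\mathcal B_D$. Green's formula for $v$ and $\phi$, both vanishing on $\partial(\Sigma\cap B_R)$ if we take Dirichlet data on the truncated domain as well, gives
\[
\lambda\int\phi v=\int(\phi Lv-vL\phi)=0,
\]
up to boundary terms on $\partial B_R$. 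Those terms are handled by the decay of $v$, or avoided entirely by working with the truncated domain's own eigenfunction, where they vanish. It follows that $\int\phi v=0$, so $v$ changes sign. A cleaner route uses nodal domains: $v$ changes sign, or at least its positive and negative parts $v^\pm$ lie in $\mathcal B_D$, with $\Q(v^\pm,v^\pm)=0$ and disjoint supports.

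Restricted to $\Sigma$, the index-one hypothesis for Neumann variations together with \eqref{eq:parity} gives $I_D\le I_N=1$, so only $I_D=1$ needs to be ruled out. Assume $I_D=1$ with negative direction $\psi$ supported in $\Sigma^\circ$. Then $\Q$ is negative semidefinite on the span of $v^+$, $v^-$ and $\psi$, after adjusting $\psi$. The inequality $\Q(v^\pm,v^\pm)\le 0$ holds by the standard computation $\int|\nabla v^\pm|^2-|A|^2(v^\pm)^2=0$ (boundary terms vanish since $v=0$ on $B$ and at infinity by decay).

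To avoid the cross terms one argues as usual. Some nontrivial combination $w=av^++bv^-$ satisfies $\Q(w,\psi)=0$. By Lemma~\ref{lem:equality}, applied with $\psi$, or its Dirichlet analogue, $\Q(w,w)=a^2\Q(v^+,v^+)+b^2\Q(v^-,v^-)=0$ forces $w$ to be a Jacobi field, which in the Neumann version also means $\partial_\eta w=0$ on $B$. But $w$ vanishes on an open set, namely a nodal domain of the other sign, so unique continuation gives $w\equiv0$, a contradiction. Since $v^\pm$ vanish on $B$ they are admissible in $\BN$, so using Lemma~\ref{lem:equality} with $\psi$ the Neumann negative direction is legitimate. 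This argument does not even need $I_D=1$: it shows directly that the Neumann index would be $\ge2$, once we have a $\psi$ with $\Q(\psi,\psi)<0$ and two independent null test functions.

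A cleaner way to finish is to reflect oddly. $v$ extends to an odd Jacobi field $\widehat v$ on $M$, and $\pm\widehat v$ have nodal domains in both halves. The conclusion $I_D=0$ amounts to showing that the odd part of $\Ind(M)$ vanishes. Since $\widehat v$ is a bounded decaying Jacobi field in the odd sector, the $L^2$-type nullity argument of \cite{CG} shows that the odd sector has no negative direction: a negative odd direction combined with the sign-changing $\widehat v$ would give index $\ge2$ in total, contradicting $\Ind(M)=I_N+I_D\le2$ only when paired with the even negative direction. I expect this step to be the main obstacle, namely making the decomposition into $\psi$ and the nodal pieces $v^\pm$ rigorous in $\BN$ and checking that $v$ genuinely changes sign. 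The latter follows since $v=0$ on $B$ and $v$ is a nontrivial Jacobi field orthogonal, via Green's formula, to a positive Dirichlet ground state.

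\textbf{Step 3: conclusion.} With $I_D=0$, \eqref{eq:parity} gives $\Ind(M)=1+0=1$. Since $M$ is complete, connected, embedded and of finite total curvature, \cite[Theorem 1.1]{CG} shows that $M$ is a catenoid.
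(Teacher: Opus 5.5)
\textbf{Gap in Step~2.} The argument breaks where you choose a nontrivial $w=av^++bv^-$ with $\Q(w,\psi)=0$ and conclude by interior unique continuation because ``$w$ vanishes on a nodal domain of the other sign.'' That fails when $w$ is a multiple of $v$, and this is exactly the case you are forced into. For a Dirichlet negative direction $\psi\in C_c^\infty(\Sigma^\circ)$, your own Step~1 gives $\Q(v,\psi)=\int_B\psi\,\partial_\eta v=0$. So the combination $w=v$ always satisfies the constraint, and up to scale it is the only one unless $\Q(v^+,\psi)=\Q(v^-,\psi)=0$. With a Neumann $\psi$ the same happens whenever $\int_B\psi\,\partial_\eta v=0$. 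A nonzero multiple of the Jacobi field $v$ vanishes on no open subset of $\Sigma^\circ$, so interior unique continuation gives nothing.

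The missing idea is unique continuation \emph{from the boundary}. Lemma~\ref{lem:equality}, applied to $v\in\BN$ with $\Q(v,v)=0$ and $\Q(v,\psi)=0$, yields $\partial_\eta v=0$ on $B$. Together with $v|_B=0$ these are vanishing Cauchy data on $B$, and Lemma~\ref{lem:boundary-uc} forces $v\equiv0$. That is the paper's entire proof. Once you use it, the ground state, the sign change of $v$, and the nodal pieces $v^\pm$ are all unnecessary. The obstacle you flagged, the rigor of the nodal decomposition, is not where the difficulty lies. Your Steps~1 and~3 otherwise agree with the paper.

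\textbf{Secondary points.} Your reason for $v\not\equiv0$ is incorrect. If $v\equiv0$, it is the double $M$, not $\Sigma$, that becomes invariant under all translations along $e_N$, and $\Sigma=\Gamma\times[0,\infty)$ is perfectly compatible with $\partial\Sigma\ne\varnothing$ and $\Sigma\subset H$. What rules this out is finite total curvature, which forces $A\equiv0$ and contradicts $I_N=1$.

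In the truncated Green's formula, the boundary term $-\int_{\Gamma_R}v\,\partial_\vartheta\phi_R$ does not vanish, because $v\ne0$ on $\Gamma_R$. So $\int\phi v=0$ needs a limiting argument with a decaying global ground state. Also, with $L\phi+\lambda\phi=0$ a negative direction has $\lambda<0$, not $\lambda>0$.

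Finally, the odd-reflection paragraph produces no contradiction: a total index of at least two is consistent with $\Ind(M)=I_N+I_D\le2$.
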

\begin{proof}
The decay in \eqref{eq:translation-decay} gives $v\in\BN$ by
Lemma~\ref{lem:energy-membership}.
Since $Lv=0$ and $v|_B=0$, integration by parts and approximation
in $\BN$ by compactly supported functions with zero boundary
value give $\Q(v,v)=0$.

Suppose, towards a contradiction, that $I_D\ge1$. Choose
$w\in C_c^\infty(\Sigma^\circ)$ with
$\Q(w,w)<0$. Since $Lv=0$ and $w$ is supported away from the boundary,
\[
\Q(v,w)=-\int_\Sigma wLv+\int_B w\partial_\eta v=0.
\]
By Lemma~\ref{lem:equality},
the hypotheses $v\in\BN$, $\Q(v,w)=0$, and $\Q(v,v)=0$ give
$\partial_\eta v=0$ on $B$. Together with $v|_B=0$, Lemma~\ref{lem:boundary-uc}
forces $v\equiv0$. However $v$ is nonzero. Otherwise $e_N$ is tangent everywhere to the complete double, so its flow makes $M$ translation-invariant in the $e_N$ direction. Finite total curvature then forces $A=0$, contradicting index one.

Hence $I_D=0$, \eqref{eq:parity} gives $\Ind(M)=1$, and \cite[Theorem 1.1]{CG} applies, forcing $M$ to be a catenoid.
\end{proof}

\section{Parallel ends: the relative harmonic-form space}\label{sec:relative}

Assume henceforth $\Pi\parallel P$. On each of the finitely many
regular ends of the double $M$, $|\ip{\nu}{e_N}|\to1$.
Consequently there is a compact set $K\subset M$ such that
$|\ip{\nu}{e_N}|>1/2$ on $M\setminus K$. The free boundary
condition gives $\ip{\nu}{e_N}=0$ on $B$, so $(M\setminus K)\cap P=\emptyset$ and $B\subset K$.
Since $B=M\cap P$ is closed in $M$, it is compact. 
Every end of $\Sigma$ is a full exterior graph over $P$. 
Let the number of the ends be $k\ge1$ and their heights be $h_\alpha\ge0$. 
At least one $h_\alpha>0$: otherwise the harmonic height $z$, zero on $B$ and tending to zero at infinity, would vanish by the maximum principle, contradicting $\partial_\eta z=-1$. 
For all sufficiently large $R$, set
\[
\Sigma_R=\Sigma\cap\overline{B_R(0)},\qquad
\Gamma_R=\Sigma\cap\partial B_R(0).
\]
Regularity of the ends implies that the large spheres meet $\Sigma$
transversely.

We construct only the exact harmonic forms needed below. Write
$\iota:B\hookrightarrow\Sigma$ for the inclusion.

\begin{proposition}\label{prop:dimension}
There is a $(k+n)$-dimensional space $\EE$ of forms $\omega=\dd F$,
where $F\in C^\infty(\Sigma)$ is harmonic, and
$F|_B=0$, containing $\dd z$.
Every $\omega\in\EE$ satisfies
\begin{equation}\label{eq:relative}
\iota^*\omega=0,\qquad
\nabla^\ell(\omega^\sharp-h_\alpha q^\top)=O(r^{1-n-\ell})
\quad\text{on }E_\alpha
\end{equation}
for a unique $q\in P$ and every $\ell\ge0$. Set $I(\omega):=q$.
\end{proposition}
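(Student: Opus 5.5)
We will build $\EE$ as the span of $\dd F$ over harmonic functions $F$ with $F|_B=0$ and controlled growth on each end, solving a mixed problem with prescribed linear-plus-constant asymptotics. On the end $E_\alpha$, which is a graph over the exterior of a ball in $P$ at height $h_\alpha$, we use coordinates $y\in P$. For data $(c,q)\in\R^k\times P$, with $c=(c_1,\dots,c_k)$, we look for a harmonic $F$ on $\Sigma$ such that
\[
F|_B=0,\qquad F=c_\alpha+h_\alpha\ip{q}{y}+O(r^{2-n})\quad\text{on }E_\alpha .
\]
This already gives a $(k+n)$-parameter family. The height $z$ lies in it, with $c_\alpha=h_\alpha$ and $q=0$: on $E_\alpha$ we have $z=h_\alpha+a_\alpha|y|^{2-n}+\cdots$, and $z$ is harmonic because $\Delta x_i=0$ on a minimal hypersurface. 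The condition $\iota^*\omega=0$ is immediate from $F|_B=0$, since $\iota^*\dd F=\dd(F|_B)=0$.

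\textbf{Existence.} Fix a cutoff $\chi$ that is supported in the ends and equal to $1$ near infinity. On $E_\alpha$, set
\[
G=\chi\bigl(c_\alpha+h_\alpha\ip{q}{y}\bigr).
\]
Then $\Delta G$ is compactly supported plus an error coming from the graph metric. Writing the metric as $g=\delta+O(|Du|^2)$ and using $Du=O(r^{1-n})$ with $D^2u=O(r^{-n})$, the Laplacian of a linear function has size $O(r^{-n})$; indeed $\Delta\ip{q}{x}=0$ exactly for ambient coordinate functions. So we use the ambient linear function $\ip{q}{x}$ rather than $\ip{q}{y}$. Since these differ only by normalization on each end, $\Delta G$ is compactly supported. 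We then solve $\Delta\phi=-\Delta G$ with $\phi|_B=-G|_B$ and $\phi\to0$ at infinity. The Dirichlet problem on exhausting domains $\Sigma_R$, with Green's-function/barrier estimates, applies because $n\ge3$ makes $\Sigma$ nonparabolic (Euclidean volume growth together with a Sobolev inequality on the double). The limit has the expansion $\phi=b_\alpha|y|^{2-n}+O(r^{1-n})$ on each end, by the standard exterior-harmonic expansion used in \cite[Section 3]{CG}. Put $F=G+\phi$. Then the derivative statement \eqref{eq:relative} follows, since
\[
\dd\ip{q}{x}\ \text{has}\ \sharp=q^\top,\qquad \nabla^\ell\,\dd\phi=O(r^{1-n-\ell}),
\]
with the higher derivatives coming from interior Schauder estimates on unit balls scaled to radius $\sim r$.

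\textbf{Dimension and uniqueness of $q$.} The map $(c,q)\mapsto\dd F$ is linear. To show it is injective, suppose $\dd F=0$. Then $F$ is constant, and since $F|_B=0$ with $B\ne\varnothing$, $F\equiv0$. Comparing asymptotics gives $h_\alpha q=0$ for every $\alpha$ and $c_\alpha=0$ for every $\alpha$. Because at least one $h_\alpha>0$, we get $q=0$. Hence $\dim\EE=k+n$. Uniqueness of $q$ in \eqref{eq:relative} follows the same way: $q^\top$ on an end with $h_\alpha>0$ tends to $q$ as $r\to\infty$, because $\nu\to\pm e_N$ and $q\in P$.

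\textbf{Main obstacle.} The hard step is the well-posedness of the mixed Dirichlet/decay problem and the precise decay of $\phi$ and its derivatives on the graphical ends. I would handle this by reflecting oddly across $P$, which turns it into a decaying Poisson problem on the double $M$. Odd reflection is compatible with $F|_B=0$, since $z$ is odd under the reflection. The problem can then be treated on the complete manifold $M$ with $L^2$/Green's function methods and the end expansions of \cite[Theorem A.4]{CG}.
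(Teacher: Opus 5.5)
Your proposal is correct and follows essentially the paper's proof. You cut off $c_\alpha+h_\alpha\ip{q}{x}$ on the ends; this is exactly harmonic and coincides with $\ip{q}{y}$ on the graphical ends because $q\in P$, so no metric error term arises. You then correct it by a decaying solution of $\Delta u=-\Delta G$ vanishing on $B$, and get independence from ``$F$ constant'' plus linear growth. The paper gets the decay directly from the barrier $(1+|x|^2)^{(2-n)/2}$ on $\Sigma_R$ with zero data on $B\sqcup\Gamma_R$, so odd reflection is not needed.

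The one step you leave implicit is the maximum principle: a harmonic function that is zero on $B$ and tends to zero on every end vanishes. This is what identifies $z$ with the constructed $F$ for data $(h,0)$, and it is also what pins the dimension at exactly $k+n$.
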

\begin{proof}
For $f\in C_c^\infty(\Sigma)$, take $R$ sufficiently
large that $\operatorname{supp}f\subset\Sigma_R\setminus\Gamma_R$ and solve
\[
\Delta u_R=f\quad\text{in }\Sigma_R,\qquad
u_R=0\quad\text{on }\partial\Sigma_R=B\sqcup\Gamma_R,
\]
The barrier in \cite[Lemma 5.2]{CG} $w=(1+|x|^2)^{(2-n)/2}$ satisfies
\[
\Delta w=-n(n-2)(1+|x|^2)^{-(n+2)/2}(1+|x^\perp|^2)<0.
\]
Choose $C$ independent of $R$ with $-C\Delta w\ge|f|$.
Since $Cw\pm u_R\ge0$ on both boundary parts, the maximum
principle gives $|u_R|\le Cw$. Interior and boundary Schauder estimates let us
send $R\to \infty$ and give
\begin{equation}\label{eq:poisson}
\Delta u=f,\qquad u|_B=0,\qquad
\nabla^\ell u=O(r^{2-n-\ell}).
\end{equation}

Choose an orthonormal basis $p_1,\ldots,p_n$ of $P$ and smooth
cutoffs $\chi_\alpha\in C^\infty(\Sigma)$, equal to one on $E_\alpha$ and
vanishing near $B$ and
on other ends. Set
\[
G_\alpha=\chi_\alpha\quad(1\le\alpha\le k),\qquad
G_{k+j}=\sum_{\alpha=1}^k\chi_\alpha h_\alpha\ip{p_j}{x}
\quad(1\le j\le n).
\]
Constants and ambient coordinate functions are harmonic on $\Sigma$,
so each $\Delta G_a$ has compact support. Apply \eqref{eq:poisson}
with $f=-\Delta G_a$ and set $F_a=G_a+u_a$. These functions are
harmonic, zero on $B$, and satisfy
\begin{equation}\label{eq:basis-ends}
\begin{aligned}
F_\alpha&=\delta_{\alpha\beta}+O(r^{2-n}),&&1\le\alpha\le k,\\
F_{k+j}&=h_\beta\ip{p_j}{x}+O(r^{2-n}),&&1\le j\le n,
\end{aligned}
\qquad\text{on }E_\beta.
\end{equation}
Define $\EE=\Span\{\dd F_1,\ldots,\dd F_{k+n}\}$.
Its elements are closed and coclosed, and $F_a|_B=0$ gives the
relative condition $\iota^*\dd F_a=0$. Differentiating
\eqref{eq:basis-ends} proves \eqref{eq:relative}, with
$q=\sum_j b_jp_j$ for
$\omega=\sum_\alpha a_\alpha\dd F_\alpha+\sum_j b_j\dd F_{k+j}$.

To prove independence, suppose this $\omega$ vanishes. Set
\[
F=\sum_{\alpha=1}^k a_\alpha F_\alpha
+\sum_{j=1}^n b_jF_{k+j}.
\]
Then $\dd F=\omega=0$, so $F$ is constant on connected $\Sigma$.
Since $F|_B=0$ and $B\ne\varnothing$, $F\equiv0$. On $E_\beta$ this gives
\[
0=a_\beta+h_\beta\ip{q}{x}+O(r^{2-n}).
\]
If $q\ne0$, choose $h_\beta>0$ and let $x$ run along the graph
over $y=tq/|q|$. Then $\ip{q}{x}=t|q|$, a contradiction as
$t\to\infty$. Thus $q=0$, hence all $b_j=0$, and taking the limit
on each end gives all $a_\beta=0$. Therefore $\dim\EE=k+n$.
Finally, $z-\sum_\alpha h_\alpha F_\alpha$ is harmonic, zero on $B$,
and tends to zero on every end. The maximum principle gives
$z=\sum_\alpha h_\alpha F_\alpha$, so $\dd z\in\EE$.
\end{proof}

\begin{remark}
The half-catenoid has vanishing first Betti number for $n\ge3$.
We therefore retain only the exact forms constructed above and omit
any additional contribution involving Betti numbers from the dimension
lower bound.

Unlike the boundaryless construction, the common end-constant mode
is not lost on taking differentials: $\sum_\alpha F_\alpha$ is zero
on $B$ and tends to one on every end. This accounts for $k+n$
rather than $k+n-1$ exact forms.
\end{remark}

\section{The trace identity: exactly the additional boundary terms}\label{sec:trace}

We recall the admissible pairings of \cite[Section 6]{CG}.
Identify ambient vectors and covectors by the Euclidean metric, and let
$\star$ denote the Euclidean Hodge star. Fix a constant
$(N-4)$-form $\Omega$ such that the self-adjoint operator
\[
P_\Omega=\operatorname{Id}+\star(\Omega\wedge\cdot)
\quad\text{on }\Lambda^2\R^N
\]
is positive semidefinite. There exist constant ambient two-forms
$\Theta_1,\ldots,\Theta_r$ such that
\[
P_\Omega=\sum_{a=1}^r\Theta_a\otimes\Theta_a,
\qquad (\Theta_a\otimes\Theta_a)(\beta)
:=\ip{\beta}{\Theta_a}\Theta_a.
\]
By abusing notation, we call both $P_\Omega$ and $\{\Theta_1,\ldots,\Theta_r\}$ an admissible pairing set
associated with $\Omega$; see \cite[Section 6]{CG}.
For any $a,b\in\R^N$, \cite[Lemma 6.1]{CG} gives
\[
\ip{P_\Omega(a\wedge b)}{a\wedge b}=|a\wedge b|^2.
\]
For $\omega\in\EE$ with $I(\omega)=q$, put
\begin{equation}\label{eq:tests}
u_a:=\ip{\nu\wedge\omega^\sharp}{\Theta_a},\qquad
c_a:=\ip{e_N\wedge q}{\Theta_a},\qquad
Z:=\ip{x}{\nu}, \qquad
\widetilde u_a:=u_a-c_aZ.
\end{equation}
Thus $\sum_ac_a^2=\ip{P_\Omega (e_N\wedge q)}{e_N\wedge q}=|q|^2$. By the end calculation in \cite[Proposition 7.2]{CG}, $\widetilde u_a=O(r^{2-n})$ and $\nabla \widetilde u_a=O(r^{1-n})$. Lemma~\ref{lem:energy-membership} therefore gives $\widetilde u_a\in\BN$. 

The local identity in \cite[Lemma 7.1]{CG} applies unchanged:
\begin{equation}\label{eq:cg-local}
\sum_a(|\nabla u_a|^2-|A|^2u_a^2)\,\dd\mu
=\tfrac12\Delta|\omega|^2\,\dd\mu
+2(-1)^n\dd(\Omega_\Sigma\wedge\omega\wedge A\omega).
\end{equation}
Here $j:\Sigma\hookrightarrow\R^N$ is the inclusion and
$\Omega_\Sigma=j^*\Omega$.
Also, $A\omega$ is the one-form defined by
$(A\omega)(X)=\omega(AX)$. 

\begin{lemma}\label{lem:flux}
On $B$ one has
\begin{equation}\label{eq:flux-zero}
\partial_\eta|\omega|^2=0,\qquad
\iota^*(\Omega_\Sigma\wedge\omega\wedge A\omega)=0,
\qquad \partial_\eta Z=0.
\end{equation}
Moreover,
\begin{equation}\label{eq:Z-energy}
\Q_\infty(Z,Z)=(n-1)\int_\Sigma|\nabla z|^2>0.
\end{equation}
\end{lemma}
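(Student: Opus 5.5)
The three boundary identities in \eqref{eq:flux-zero} will be read off pointwise from \eqref{eq:geometry}, \eqref{eq:height-boundary} and the relative condition $\iota^*\omega=0$. The energy identity \eqref{eq:Z-energy} will come from the Jacobi-type equation satisfied by the support function $Z$ together with one integration by parts.

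\emph{Boundary terms.} Since $\iota^*\omega=0$, on $B$ we have $\omega^\sharp=\phi\,\eta$ with $\phi=\omega(\eta)$. Write $\omega=\dd F$ and extend $\eta$ near $B$ by geodesics normal to $B$. Then
\[
\partial_\eta|\omega|^2=2\ip{\nabla_\eta\nabla F}{\nabla F}=2\phi\,\nabla^2F(\eta,\eta).
\]
On $B$ we have $F=0$, and $B$ is totally geodesic by \eqref{eq:geometry}. Together with $\Delta F=0$ this gives
\[
\nabla^2F(\eta,\eta)=\Delta F-\sum_{i}\nabla^2F(e_i,e_i)=-\Delta_B(F|_B)=0,
\]
where $e_i$ is an orthonormal frame of $TB$. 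The tangential Hessian terms reduce to $\Delta_B F$ precisely because the second fundamental form of $B\subset\Sigma$ vanishes. Hence $\partial_\eta|\omega|^2=0$.

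For the second identity, $A\eta=\kappa\eta$ gives $A\omega=\phi\kappa\,\eta^\flat$ on $B$. Therefore $\omega\wedge A\omega=\phi^2\kappa\,\eta^\flat\wedge\eta^\flat=0$ pointwise on $B$, and a fortiori its pullback by $\iota$ vanishes.

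For the third, $\nabla_\eta Z=\ip{\eta}{\nu}+\ip{x}{D_\eta\nu}=0-\ip{x}{A\eta}=-\kappa\ip{x}{\eta}=\kappa z=0$ on $B$, using $\eta=-e_N$ and $z=0$ there.

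\emph{Energy of $Z$.} I will use the standard identity $LZ=0$ on a minimal hypersurface: $Z$ is the normal component of the dilation field, so it is a Jacobi field. Integrate $|\nabla Z|^2-|A|^2Z^2=\operatorname{div}(Z\nabla Z)$ over $\Sigma_R$. The boundary integral over $B$ vanishes because $\partial_\eta Z=0$, so
\[
\Q_\infty(Z,Z)=\lim_{R\to\infty}\int_{\Gamma_R}Z\,\partial_rZ.
\]
On an end $E_\alpha$ with height $h_\alpha$, the expansion of regularity at infinity gives
\[
Z=\pm\bigl(h_\alpha+(2-n)a_\alpha r^{2-n}+O(r^{1-n})\bigr),\qquad
\partial_rZ=\pm(2-n)^2a_\alpha r^{1-n}+O(r^{-n}).
\]
The key observation is that $Z-h_\alpha$ decays like $r^{2-n}$, because $\ip{x}{\nu}$ of a graph $v$ is, up to sign, $v-y\cdot Dv$ plus lower order. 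Consequently
\[
\int_{\Gamma_R}Z\,\partial_rZ\;\longrightarrow\;\sum_\alpha h_\alpha(n-2)^2a_\alpha|S^{n-1}|,
\]
up to the signs fixed by the orientation. Separately, $z=\sum_\alpha h_\alpha F_\alpha$ with $F_\alpha=\delta_{\alpha\beta}+(\text{const})\,r^{2-n}$. Therefore
\[
\int_\Sigma|\nabla z|^2=\lim_{R\to\infty}\int_{\Gamma_R}z\,\partial_rz=\sum_\alpha h_\alpha(n-2)a_\alpha|S^{n-1}|,
\]
where $a_\alpha$ is the $r^{2-n}$ coefficient of the height $z=v_\alpha$, since $z$ itself is the graph function. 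The boundary term on $B$ does not contribute here since $z=0$ on $B$.

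Comparing the two limits yields a ratio of $n-2$, not $n-1$. So the most error-prone step, and the one I expect to be the main obstacle, is the precise asymptotic bookkeeping of $Z$. I will redo it carefully using the correct formula $Z=\pm(v-y\cdot Dv)/\sqrt{1+|Dv|^2}$, which produces the factor $1-(2-n)=n-1$, together with $\partial_r$ of that term. The target is to show that the limit equals $(n-1)$ times the $z$-flux, with the signs aligned because both computations use the same $\nu$ on each end.

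Positivity of \eqref{eq:Z-energy} then follows from $\int_\Sigma|\nabla z|^2>0$: $z$ is nonconstant, since $\partial_\eta z=-1$ on $B$.
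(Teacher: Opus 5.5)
Your route is the paper's. The three boundary identities are obtained the same way. Your computation $\nabla^2F(\eta,\eta)=-\Delta_B(F|_B)=0$ is exactly the paper's coclosedness-plus-total-geodesicity argument for $(\nabla_\eta\omega)(\eta)=0$. Your pointwise identity $\omega\wedge A\omega=\phi^2\kappa\,\eta^\flat\wedge\eta^\flat=0$ on $B$ is a harmless strengthening of the paper's appeal to $\iota^*\omega=0$. The energy identity is also the same: integrate $LZ=0$ over $\Sigma_R$, kill the $B$-term with $\partial_\eta Z=0$, and compare fluxes on $\Gamma_R$. The paper imports that flux comparison from \cite[Corollary 4.2]{CG}, whereas you compute it by hand.

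The only real defect is the displayed expansion of $Z$, which you rightly suspected. Since $y\cdot Dv_\alpha=(2-n)a_\alpha r^{2-n}+O(r^{1-n})$, one gets $v_\alpha-y\cdot Dv_\alpha=h_\alpha+(n-1)a_\alpha r^{2-n}+O(r^{1-n})$, not $h_\alpha+(2-n)a_\alpha r^{2-n}$. Hence
\[
Z=\pm\bigl(h_\alpha+(n-1)a_\alpha r^{2-n}+O(r^{1-n})\bigr),\qquad
\partial_rZ=\pm\bigl((n-1)(2-n)a_\alpha r^{1-n}+O(r^{-n})\bigr),
\]
while $z\,\partial_rz=(2-n)h_\alpha a_\alpha r^{1-n}+O(r^{-n})$. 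Since the cross terms are $O(r^{3-2n})=O(r^{-n})$ for $n\ge3$, this gives $Z\,\partial_rZ=(n-1)\,z\,\partial_rz+O(r^{-n})$ pointwise on $\Gamma_R$. Then $|\Gamma_R|=O(R^{n-1})$ turns the error into $O(R^{-1})$. Replacing $\partial_r$ by the true conormal of $\Gamma_R$ changes only lower-order terms. So your announced correction is right, and the ``$n-2$'' computation should simply be discarded rather than reconciled.

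Three small points remain. First, the orientation sign cancels because $Z\,\partial_rZ$ is quadratic in $\nu$, not because ``both computations use the same $\nu$''; $z$ does not involve $\nu$ at all. Second, comparing pointwise also sidesteps the sign slip in your $z$-flux, which is $\sum_\alpha(2-n)h_\alpha a_\alpha|S^{n-1}|>0$. Third, you should record that $|\nabla Z|=|Ax^\top|=O(r^{1-n})$ and $|A|^2Z^2=O(r^{-2n})$ are integrable, so that $\Q_\infty(Z,Z)=\lim_R\Q_R(Z,Z)$ is justified.
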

\begin{proof}
Recall that for a local orthonormal frame
$X_1,\ldots,X_n$ of $T\Sigma$, the codifferential is
$\delta=-\sum_{i=1}^n\iota_{X_i}\circ \nabla_{X_i}$.

Choose the frame adapted to the boundary, with $X_1,\ldots,X_{n-1}\in TB$ and $X_n=\eta$ on $B$. Since $\omega$ is coclosed,
\[
0=\delta\omega
=-\sum_{i=1}^{n-1}(\nabla_{X_i}\omega)(X_i)
-(\nabla_\eta\omega)(\eta).
\]
On $B$, the relative condition gives $\omega(X_i)=0$ for $i<n$,
and total geodesicity of $B\subset \Sigma$ gives $\nabla_{X_i}X_i=\nabla^B_{X_i}X_i\in TB$. Thus
\[
(\nabla_{X_i}\omega)(X_i)
=X_i(\omega(X_i))-\omega(\nabla_{X_i}X_i)=0,
\qquad i<n,
\]
and consequently $(\nabla_\eta\omega)(\eta)=0$.
Write $b:=\omega(\eta)$, so $\omega^\sharp=b\eta$ on $B$.
On $B$,
\[
\partial_\eta|\omega|^2
=2\langle\nabla_\eta\omega,\omega\rangle
=2\langle\nabla_\eta\omega,b\eta^\flat\rangle
=2b(\nabla_\eta\omega)(\eta)=0.
\]

Secondly, $\iota^*(\Omega_\Sigma\wedge\omega\wedge A\omega)=0$ because $\iota^*\omega=0$.

Finally, \eqref{eq:geometry} and $\nu\in P$ on $B$ give
\[
\partial_\eta Z=\ip{D_\eta x}{\nu}+\ip{x}{D_\eta \nu}=\ip{\eta}{\nu}-\ip{x}{A\eta}=-\kappa\ip{x}{\eta}=0.
\]

The dilation field satisfies $LZ=0$. By \cite[Lemma 4.1]{CG},
$Z$ has finite energy.
Choose $R$ sufficiently large that $B\subset B_R(0)$ and
$\partial B_R(0)$ meets all ends transversely, and set
\[
\Sigma_R=\Sigma\cap B_R(0),\qquad
\Gamma_R=\Sigma\cap\partial B_R(0),\qquad
\partial\Sigma_R=B\sqcup\Gamma_R.
\]
Let $\vartheta$ be the outward conormal on $\Gamma_R$.
As in \cite[Corollary 4.2]{CG}, the end expansions in
\cite[Lemmas 3.5 and 4.1]{CG} give
\[
Z\partial_\vartheta Z=(n-1)z\partial_\vartheta z+O(R^{-n})
\quad\text{on }\Gamma_R.
\]
Integrating by parts twice, first using $LZ=0$ and then $\Delta z=0$,
we obtain
\[
\begin{aligned}
\Q_R(Z,Z)
&=\int_{\Sigma_R}(|\nabla Z|^2-|A|^2Z^2)\\
&=-\int_{\Sigma_R}ZLZ
+\int_{\Gamma_R}Z\partial_\vartheta Z
+\int_B Z\partial_\eta Z\\
&=\int_{\Gamma_R}Z\partial_\vartheta Z\\
&=(n-1)\int_{\Gamma_R}z\partial_\vartheta z
+O(R^{-n})|\Gamma_R|\\
&=(n-1)\left(\int_{\Sigma_R}|\nabla z|^2
+\int_{\Sigma_R}z\Delta z-\int_B z\partial_\eta z\right)
+O(R^{-n})|\Gamma_R|\\
&=(n-1)\int_{\Sigma_R}|\nabla z|^2+O(R^{-1}).
\end{aligned}
\]
The boundary terms on $B$ vanish by $\partial_\eta Z=0$ and
$z|_B=0$, respectively; the error estimate uses
$|\Gamma_R|=O(R^{n-1})$ from \cite[Lemma 3.2]{CG}.
Letting $R\to\infty$ and using finiteness of the Dirichlet energy of $z$ completes the proof.
\end{proof}

\begin{proposition}\label{prop:trace}
For the functions in \eqref{eq:tests},
\begin{equation}\label{eq:corrected}
\sum_a\Q(\widetilde u_a,\widetilde u_a)=-|q|^2(n-1)\int_\Sigma|\nabla z|^2.
\end{equation}
\end{proposition}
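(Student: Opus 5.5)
Since $\Q(\widetilde u_a,\widetilde u_a)=\lim_{R\to\infty}\Q_R(\widetilde u_a,\widetilde u_a)$ with $\Q_R$ the integral over $\Sigma_R$ (all $\widetilde u_a\in\BN$ have finite energy), I will compute on $\Sigma_R$ and pass to the limit. I expand
\[
\sum_a\Q_R(\widetilde u_a,\widetilde u_a)=\sum_a\Q_R(u_a,u_a)-2\sum_a c_a\Q_R(u_a,Z)+|q|^2\Q_R(Z,Z),
\]
using $\sum_ac_a^2=|q|^2$. The last term tends to $|q|^2(n-1)\int_\Sigma|\nabla z|^2$ by \eqref{eq:Z-energy}. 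For the first term, I integrate \eqref{eq:cg-local} over $\Sigma_R$. By Stokes the result is a sum of integrals over $B$ and $\Gamma_R$. The $B$ contributions are $\tfrac12\int_B\partial_\eta|\omega|^2$ and $2(-1)^n\int_B\iota^*(\Omega_\Sigma\wedge\omega\wedge A\omega)$, and both vanish by \eqref{eq:flux-zero}. So only the $\Gamma_R$ flux remains, exactly as in the boundaryless case of \cite{CG}.

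For the cross term, I use $LZ=0$ and integrate by parts:
\[
\Q_R(u_a,Z)=\int_{\Gamma_R}u_a\partial_\vartheta Z+\int_B u_a\partial_\eta Z,
\]
and the $B$ term vanishes by $\partial_\eta Z=0$ from Lemma~\ref{lem:flux}. Thus every interaction with the free boundary drops out. What remains is a collection of integrals over $\Gamma_R$, depending only on the end asymptotics \eqref{eq:relative} and the expansions of $\nu$ and $Z$ on each end $E_\alpha$.

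The main step is evaluating these $\Gamma_R$ integrals. My plan is to reuse the end computation of \cite[Proposition 7.2]{CG} verbatim, end by end. On $E_\alpha$ the form $\omega^\sharp$ is asymptotic to $h_\alpha q^\top$ rather than to a fixed vector. This is harmless: it has the same structure as in \cite{CG}, since the constant leading part only enters through $u_a\approx c_a\sigma_\alpha h_\alpha$ and $Z\approx\sigma_\alpha h_\alpha$ modulo decaying terms. The outcome should be that the $\Gamma_R$ terms of $\sum_a\Q_R(u_a,u_a)-2\sum_ac_a\Q_R(u_a,Z)$ combine to $-2|q|^2\int_{\Gamma_R}Z\partial_\vartheta Z+o(1)$.

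Using $\int_{\Gamma_R}Z\partial_\vartheta Z\to(n-1)\int_\Sigma|\nabla z|^2$ from the proof of Lemma~\ref{lem:flux}, the total is then
\[
(-2+1)|q|^2(n-1)\int_\Sigma|\nabla z|^2,
\]
which is \eqref{eq:corrected}. The hard part is the bookkeeping of the boundary-at-infinity terms, in particular tracking that the constants $h_\alpha$ appear consistently in the asymptotics of $u_a$ and $Z$. I would cross-check the result against the direct formulation: $\widetilde u_a\in\BN$ with $\Q_\infty(u_a,u_a)$-type identities, which gives the same sign structure as \cite[Proposition 7.2]{CG}.
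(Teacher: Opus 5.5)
Your proposal is correct and follows the paper's route. You expand $\widetilde u_a=u_a-c_aZ$ on $\Sigma_R$, integrate \eqref{eq:cg-local} and the cross term by parts using $LZ=0$, discard the three extra terms on $B$ by Lemma~\ref{lem:flux}, and defer the $\Gamma_R$ terms to the end computation of \cite[Proposition 7.2]{CG}.

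The only difference is cosmetic. You pair $Z$ with $u_a$ rather than $\widetilde u_a$, so your $\Gamma_R$ terms sum to $-2|q|^2\int_{\Gamma_R}Z\partial_\vartheta Z+o(1)$ instead of $o(1)$. The two bookkeepings agree via $\Q_R(u_a,Z)=\Q_R(\widetilde u_a,Z)+c_a\Q_R(Z,Z)$.
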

\begin{proof}
Following \cite[Proposition 7.2]{CG}, we compute
\[
\sum_a\Q_R(\widetilde u_a,\widetilde u_a)+|q|^2\Q_R(Z,Z)
=\sum_a\Q_R(u_a,u_a)-2\sum_ac_a\Q_R(\widetilde u_a,Z).
\]
By integrating by parts in the second term, the only difference compared to \cite[Proposition 7.2]{CG} is some extra boundary terms
\begin{equation}\label{eq:physical}
\begin{aligned}
\mathfrak B={}&\int_B\left(\tfrac12\partial_\eta|\omega|^2
-2\sum_ac_a\widetilde u_a\partial_\eta Z\right)\,\dd S\\
&+2(-1)^n\int_B\iota^*(\Omega_\Sigma\wedge\omega\wedge A\omega).
\end{aligned}
\end{equation}
All three terms vanish by Lemma~\ref{lem:flux}. Thus the rest follows from the calculation of \cite[Proposition 7.2]{CG}. Passing to the limit and applying \eqref{eq:Z-energy} complete the proof.
\end{proof}

\section{Boundary rigidity and completion of the proof}\label{sec:rigidity}

We continue with the parallel-end case for the hypersurface $\Sigma$
of Theorem~\ref{thm:main}. Recall that $B=\partial\Sigma\ne\varnothing$,
$z=x_N$, and $\eta=-e_N$ on $B$. We consider exact harmonic forms
$\omega=\dd F$, where $F\in C^\infty(\Sigma)$ satisfies $\Delta F=0$ and $F|_B=0$, in particular
the forms in $\EE$ constructed in Proposition~\ref{prop:dimension}.
The Neumann
conditions in the following lemma imply $F\in\Span\{z\}$, as follows.

\begin{lemma}\label{lem:rigidity}
Let $F\in C^\infty(\Sigma)$ satisfy $\Delta F=0$ in $\Sigma^\circ$
and $F|_B=0$. For one fixed admissible pairing as above, define
\[
u_a=\ip{\nu\wedge\nabla F}{\Theta_a},\qquad 1\le a\le r.
\]
If $\partial_\eta u_a=0$ on $B$ for every $a=1,\ldots,r$, then
$F=\lambda z$ on $\Sigma$ for some constant $\lambda\in\R$;
equivalently, $\dd F=\lambda\dd z$.
\end{lemma}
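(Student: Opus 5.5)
The plan is to compute $\partial_\eta u_a$ explicitly on $B$ and show that the hypothesis forces $\partial_\eta F$ to be locally constant along $B$. Once that is known, $F$ and a multiple of $z$ have the same Cauchy data on a component of $B$, and Lemma~\ref{lem:boundary-uc} with $V=0$ finishes the proof.

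\emph{Step 1: structure of $\nabla F$ on $B$.} Since $F|_B=0$, on $B$ we have $\nabla F=b\,\eta$ with $b:=\partial_\eta F\in C^\infty(B)$. The argument in the proof of Lemma~\ref{lem:flux} applies to $\omega=\dd F$, because it used only $\delta\omega=0$, $\iota^*\omega=0$ and total geodesicity of $B$. It gives $\nabla^2F(\eta,\eta)=(\nabla_\eta\omega)(\eta)=0$ on $B$. For $X\in TB$, symmetry of the Hessian together with $\nabla_X\eta=0$ from \eqref{eq:geometry} gives
\[
\nabla^2F(\eta,X)=X(\omega(\eta))-\omega(\nabla_X\eta)=X(b).
\]
Hence, on $B$, $\nabla_\eta\nabla F=\nabla^Bb$, the gradient of $b$ along $B$.

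\emph{Step 2: computing $\partial_\eta u_a$.} I would differentiate $u_a=\ip{\nu\wedge\nabla F}{\Theta_a}$ with ambient derivatives, using that $\Theta_a$ is constant. The two ingredients are
\[
D_\eta\nu=-A\eta=-\kappa\eta,\qquad
D_\eta(\nabla F)=\nabla_\eta\nabla F+A(\eta,\nabla F)\nu=\nabla^Bb+b\kappa\,\nu,
\]
where the second uses $A\eta=\kappa\eta$ from \eqref{eq:geometry}. Since $\nabla F=b\eta$ on $B$, the first term contributes $-\kappa b\,\eta\wedge\eta=0$, and the normal part of the second contributes $b\kappa\,\nu\wedge\nu=0$. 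So on $B$,
\[
\partial_\eta u_a=\ip{\nu\wedge\nabla^Bb}{\Theta_a}.
\]
Summing the squares over $a$ and using $P_\Omega=\sum_a\Theta_a\otimes\Theta_a$ together with \cite[Lemma 6.1]{CG},
\[
\sum_a(\partial_\eta u_a)^2=\ip{P_\Omega(\nu\wedge\nabla^Bb)}{\nu\wedge\nabla^Bb}=|\nu\wedge\nabla^Bb|^2=|\nabla^Bb|^2,
\]
since $\nu\perp TB$ and $|\nu|=1$. The hypothesis therefore gives $\nabla^Bb=0$, so $b$ is constant on each connected component of $B$.

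\emph{Step 3: unique continuation.} Fix a component $B_0$ of $B$, let $b\equiv b_0$ there, and set $G=F+b_0z$. Then $G$ is harmonic in $\Sigma^\circ$. On $B_0$ we have $G=0$, and by \eqref{eq:height-boundary} also $\partial_\eta G=b_0-b_0=0$. Lemma~\ref{lem:boundary-uc} with $V=0$ gives $G\equiv0$, so $F=\lambda z$ with $\lambda=-b_0$. In particular $b$ is then automatically the same constant on every component of $B$.

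\emph{Main obstacle.} The delicate point is the boundary computation in Steps 1–2. One has to verify that the terms involving $\kappa$ cancel exactly rather than leaving a multiple of $\kappa b$, which relies on $\eta$ being a principal direction and on $\nabla F$ being parallel to $\eta$. One also has to confirm that $\nabla^2F(\eta,\eta)=0$ really holds for a general harmonic $F$ with $F|_B=0$; this needs coclosedness of $\dd F$ together with total geodesicity of $B$, and I would recheck the sign conventions there carefully.
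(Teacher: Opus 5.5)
Your proposal is correct and follows the paper's proof essentially step for step. It uses the same boundary computation $D_\eta(\nu\wedge\nabla F)=\nu\wedge\nabla^B b$, obtained from closedness, from coclosedness together with total geodesicity of $B$, and from $A\eta=\kappa\eta$; the same identity $P_\Omega=\sum_a\Theta_a\otimes\Theta_a$ with \cite[Lemma 6.1]{CG} to get $|\nabla^B b|^2=0$; and the same application of Lemma~\ref{lem:boundary-uc} to $F+b_0z$. The only difference is cosmetic: you work on a whole component of $B$, where the paper uses a connected neighborhood of a point.
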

\begin{proof}
Put $\omega=\dd F$ and $b=\omega(\eta)=\partial_\eta F$. Closedness and \eqref{eq:geometry} imply, for $X\in TB$,
\[
(\nabla_\eta\omega)(X)=(\nabla_X\omega)(\eta)=Xb.
\]
Coclosedness and total geodesicity give $(\nabla_\eta\omega)(\eta)=0$, as in Lemma~\ref{lem:flux}.
By \eqref{eq:geometry}, $$A(\eta,\omega^\sharp)\nu=\kappa\omega(\eta)\nu=\kappa b\nu$$
Therefore
\[
\nabla_\eta\omega^\sharp=\nabla^B b,
\qquad D_\eta\omega^\sharp=\nabla^B b+\kappa b\nu.
\]
Using $D_\eta\nu=-\kappa\eta$ and $\omega^\sharp=b\eta$ on $B$, we obtain
\begin{equation}\label{eq:normal-wedge}
\begin{aligned}
D_\eta(\nu\wedge\omega^\sharp)
&=(-\kappa\eta)\wedge(b\eta)
+\nu\wedge(\nabla^B b+\kappa b\nu)\\
&=\nu\wedge\nabla^B b.
\end{aligned}
\end{equation}
Taking pairing components and using decomposability gives
\[
0=\sum_a(\partial_\eta u_a)^2
=\ip{P_\Omega(\nu\wedge\nabla^B b)}{\nu\wedge\nabla^B b}
=|\nabla^B b|^2.
\]
Hence, for any $p\in B$, choose a connected open neighborhood $U\subset B$ of $p$; then $b=b_0$ on $U$ for some constant $b_0$. On $U$, the harmonic function $F+b_0z$ satisfies
\[
F+b_0z=0,\qquad \partial_\eta(F+b_0z)=b_0-b_0=0.
\]
Lemma~\ref{lem:boundary-uc} gives $F=-b_0z$ globally. 
\end{proof}

\begin{proof}[Proof of Theorem~\ref{thm:main}]
In $\R^3$, the index-one half-catenoid classification is by \cite[Corollary 24]{ABCS}.

In the orthogonal-end case the double is a catenoid by Proposition~\ref{prop:orthogonal}. 

Now consider the parallel-end case. If $k\ge2$, Proposition~\ref{prop:dimension} gives $\dim\EE=k+n\ge N+1$. Choose an $N$-dimensional subspace $V\subset\EE$ with
\[
V\cap\Span\{\dd z\}=\{0\}.
\]
Fix $\psi\in C_c^\infty(\Sigma)$ with $\Q(\psi,\psi)<0$. Define $T:V\to\Lambda^2\R^N$ by
\[
\ip{T(\omega)}{\Theta}
=\Q_\infty\bigl(\ip{\nu\wedge\omega^\sharp}{\Theta},\psi\bigr).
\]
This is well-defined since $\psi$ is compactly supported.
If $T$ has a nonzero kernel vector, take $P_\Omega=\operatorname{Id}$; otherwise $T(V)$ has dimension $N$, and \cite[Propositions 6.7--6.8]{CG} give a nonzero $\omega\in V$ and an admissible $P_\Omega$ with $P_\Omega T(\omega)=0$. Factoring the pairing yields
\[
0=\ip{P_\Omega T(\omega)}{T(\omega)}
=\sum_a\Q_\infty(u_a,\psi)^2.
\]
Thus $\Q_\infty(u_a,\psi)=0$ for every $a$.
Since $LZ=0$ and $\partial_\eta Z=0$, integration by parts against
$\psi$ gives $\Q_\infty(Z,\psi)=0$. Hence
$\Q(\widetilde u_a,\psi)=0$.
Lemma~\ref{lem:equality} gives $\Q(\widetilde u_a,\widetilde u_a)\ge0$, whereas
Proposition~\ref{prop:trace} gives
\[
0\le\sum_a\Q(\widetilde u_a,\widetilde u_a)
=-|q|^2(n-1)\int_\Sigma|\nabla z|^2\le0.
\]
By \eqref{eq:Z-energy}, $q=0$ and $\Q(\widetilde u_a,\widetilde u_a)=0$ for every $a$.
Thus $c_a=0$ and $\widetilde u_a=u_a$; the equality statement in
Lemma~\ref{lem:equality} yields $Lu_a=0$ and $\partial_\eta u_a=0$.
 Since $\omega\in\EE$, there is $F\in C^\infty(\Sigma)$ with $\omega=\dd F$, $\Delta F=0$, and $F|_B=0$. Lemma~\ref{lem:rigidity} gives $\omega\in\Span\{\dd z\}$, contradicting the choice of $V$.

Thus $k=1$. Since $B$ is compact, the upper end and its reflected lower end are disjoint outside a compact set, so the double has exactly two regular ends. Schoen's two-ended theorem \cite[Theorem 3]{Schoen} makes it a catenoid.

\end{proof}

\section{Free Boundary Minimal Hypersurfaces in a Quarter-Space}\label{sec:quarter}
Now we consider a different ambient space. Let
$W=\{x_1\ge0,\ x_2\ge0\}\subset\R^{n+1}$, $n\ge3$, with bounding
hyperplanes $P_i=\{x_i=0\}$ and reflections of $\R^{n+1}$ across $P_i$ being $\rho_i$, $i=1,2$.
A \emph{quarter minimal hypersurface} is a complete, connected,
two-sided, embedded minimal hypersurface $\Lambda\subset W$ of finite
total curvature, with $\partial\Lambda=B_1\cup B_2$, where
$B_i=\Lambda\cap P_i$, meeting each $P_i$ orthogonally.
We require $B_i\setminus E\ne\varnothing$, where $E=B_1\cap B_2$, $i=1,2$. It's a manifold with corners, with local model
$[0,\infty)^2\times\R^{n-2}$ at $E$.
Let $\nu$ be a unit normal to
$\Lambda$; the outward conormal to $B_i\setminus E$ is $\eta_i=-e_i$.

Reflecting gives $\Sigma=\Lambda\cup\rho_1(\Lambda)$
, which is a complete, connected,
two-sided, embedded free
boundary minimal hypersurface with nonempty boundary on $P_2$, which is smooth by standard elliptic theory.
By Section~\ref{sec:forms}, the ends of $M$ have a common asymptotic
hyperplane direction $\Pi$. Since $\Pi^\perp$ is invariant under both
reflections, $\Pi$ is orthogonal to at least one $P_i$.
Relabeling the planes if necessary, assume $\Pi\perp P_1$, so
$e_1\in\Pi$, and set $v:=\ip{\nu}{e_1}$ on $\Lambda$.

Write $\Q_\Lambda(f,g)=\int_\Lambda
(\ip{\nabla f}{\nabla g}-|A|^2fg)$. Define $I_{NN}(\Lambda)$ and
$I_{DN}(\Lambda)$ as the maximal dimensions of subspaces on which
$\Q_\Lambda$ is negative definite in $C_c^\infty(\Lambda)$ and
$C_c^\infty(\Lambda\setminus B_1)$, respectively.
In the latter space functions are extended by zero to $B_1$.
The first letter refers to $B_1$ and the second to $B_2$. 
The Morse index of $\Lambda$ as a free boundary minimal surface equals $I_{NN}(\Lambda)$.
The even--odd
decomposition under $\rho_1$, as in
\cite[Lemmas 16, 18, 20]{ABCS}, gives
\begin{equation}\label{eq:quarter-parity}
I_{DN}(\Lambda)\le I_{NN}(\Lambda),\qquad
I_N(\Sigma)=I_{NN}(\Lambda)+I_{DN}(\Lambda).
\end{equation}

\paragraph{Example (Quarter-catenoids).}
Up to rigid motions and scaling, there are two types of quarter-catenoids.
Let $C\subset\R^N$ be a catenoid with axis $\R e_N$ and waist in
$\{x_N=0\}$. Cutting by two orthogonal hyperplanes containing the
axis gives $C\cap\{x_1\ge0,\ x_2\ge0\}$, which has two ends.
Cutting by the waist hyperplane and a hyperplane containing the axis
gives $C\cap\{x_1\ge0,\ x_N\ge0\}$, which has one end.
Both meet their bounding hyperplanes orthogonally. These are the only
two types, since every reflection symmetry hyperplane of a catenoid
either contains its axis or is its waist hyperplane.

\begin{theorem}\label{thm:quarter}
If a quarter minimal hypersurface $\Lambda$ has $I_{NN}(\Lambda)=1$,
then it is a quarter-catenoid.
\end{theorem}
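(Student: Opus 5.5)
The plan is to mimic the orthogonal-end argument of Proposition~\ref{prop:orthogonal}, with the Jacobi field $v=\ip{\nu}{e_1}$ coming from translation in $e_1\in\Pi$. This yields $I_{DN}(\Lambda)=0$. Then \eqref{eq:quarter-parity} gives $I_N(\Sigma)=1$, and Theorem~\ref{thm:main} applies to the half-space hypersurface $\Sigma\subset\{x_2\ge0\}$.

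First, $v$ on $\Lambda$ has the right properties. It satisfies $Lv=0$, since translation preserves minimality. On $B_1\setminus E$ we have $\nu\in P_1$, so $v=0$ there. On $B_2\setminus E$ we have $\nu\in P_2$ and $D_{\eta_2}\nu=-\kappa_2\eta_2=\kappa_2e_2$ by \eqref{eq:geometry} applied to $P_2$. Hence $\partial_{\eta_2}v=\ip{D_{\eta_2}\nu}{e_1}=0$. The end estimates \eqref{eq:translation-decay} hold verbatim because $e_1\in\Pi$: $v=O(r^{1-n})$ and $\nabla v=O(r^{-n})$. Lemma~\ref{lem:energy-membership} (whose proof is insensitive to corners) puts $v$ in the energy completion of $C_c^\infty(\Lambda)$. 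Integrating by parts on $\Lambda\cap B_R(0)$ and using $v|_{B_1}=0$ and $\partial_{\eta_2}v|_{B_2}=0$, with the corner $E$ having codimension two in $\Lambda$ so that it contributes nothing, gives $\Q_\Lambda(v,v)=0$.

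Next, suppose $I_{DN}(\Lambda)\ge1$ and pick $w\in C_c^\infty(\Lambda\setminus B_1)$ with $\Q_\Lambda(w,w)<0$. Then
\[
\Q_\Lambda(v,w)=-\int_\Lambda wLv+\int_{B_2}w\,\partial_{\eta_2}v=0 .
\]
The analogue of Lemma~\ref{lem:equality} for $\Lambda$, with $I_{NN}=1$, then shows that $v$ is a Neumann Jacobi field, so in particular $\partial_{\eta_1}v=0$ on $B_1\setminus E$. The proof of that analogue carries over by doubling across both $P_1$ and $P_2$; alternatively, double only across $P_2$ and work on $\Sigma$ restricted to $\rho_1$-even functions. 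Combined with $v=0$ on $B_1\setminus E$, which is a nonempty open set by hypothesis, Lemma~\ref{lem:boundary-uc} gives $v\equiv0$. Indeed, after reflecting across $P_2$, $B_1\setminus E$ contains an open piece of a smooth boundary away from the corner. But $v\equiv0$ would make the full fourfold double invariant under $e_1$-translations, and finite total curvature would then force flatness, contradicting index one. Hence $I_{DN}=0$ and $I_N(\Sigma)=1$.

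Finally, $\Sigma$ satisfies all hypotheses of Theorem~\ref{thm:main}: it is complete, connected, two-sided, embedded, of finite total curvature, and has nonempty free boundary on $P_2$, since $B_2\setminus E\neq\varnothing$. So $\Sigma$ is a half-catenoid. Its double $M$ is then a catenoid invariant under $\rho_1$ and $\rho_2$. By the reflection-hyperplane observation in the Example, $P_1$ and $P_2$ each either contain the axis or are the waist hyperplane, and they cannot both be the waist. So $\Lambda=M\cap W$ is one of the two quarter-catenoids.

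The step I expect to need the most care is the corner $E$. Several points must be checked: the integration by parts producing $\Q_\Lambda(v,v)=0$ and $\Q_\Lambda(v,w)=0$ has no contribution from $E$; the regularity of the weak Neumann solution near $E$, handled via the fourfold reflection; and the density and energy-space facts in the presence of corners. I would handle all of these uniformly by passing to the smooth fourfold double $M$ and to functions on it that are even under $\rho_2$ and either even or odd under $\rho_1$.
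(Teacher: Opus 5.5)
Your proposal is correct and is essentially the paper's proof. The paper uses the same Jacobi field $v=\ip{\nu}{e_1}$, the same boundary unique continuation on $B_1\setminus E$, and the same reduction via \eqref{eq:quarter-parity} to Theorem~\ref{thm:main}; the only difference is packaging, since it writes out the localized perturbation $v-th_0$ (with $h$ supported near $B_1\setminus E$ and $\Q(v,h)=\int_{B_1}\chi(\partial_{\eta_1}v)^2>0$) instead of quoting the equality case of Lemma~\ref{lem:equality}, which is the contrapositive of the same computation. Because $v$ is already smooth, your equality step only needs $\Q_\Lambda(v,g)=0$ for $g\in C_c^\infty(\Lambda)$ supported near $B_1\setminus E$, which gives $\partial_{\eta_1}v=0$ there by one integration by parts, so the corner-regularity and fourfold-doubling issues you flag at the end never actually arise.
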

\begin{proof}
In $\R^3$, the index-one quarter-catenoid classification is by Chen \cite[Proposition 3.1]{Chen}.

Now consider $n+1\geq 4$. Write $\Q=\Q_\Lambda$. The calculation in
\eqref{eq:translation-decay}, with $e_1$ in place of $e_N$, gives
$v=O(r^{1-n})$ and $\nabla v=O(r^{-n})$, and $Lv=0$.
The free boundary condition gives $v|_{B_1}=0$.
On $B_2\setminus E$, \eqref{eq:geometry} gives
$A\eta_2=\kappa_2\eta_2$, hence
$\partial_{\eta_2}v=-\ip{A\eta_2}{e_1}=0$.
Moreover $v\not\equiv0$: otherwise $e_1$ is tangent everywhere to
the complete double $M$, which is then translation-invariant in
this direction. Finite total curvature forces $A=0$, contradicting
$I_{NN}(\Lambda)=1$.

For $g\in C_c^\infty(\Lambda)$ with $g|_{B_1}=0$, integration by
parts on the manifold with corners \cite[Theorem 16.25]{Lee} gives
\[
\begin{aligned}
\Q(v,g)
&=\int_\Lambda\bigl(\ip{\nabla v}{\nabla g}-|A|^2vg\bigr)\\
&=-\int_\Lambda gLv
  +\int_{B_1\setminus E}g\,\partial_{\eta_1}v
  +\int_{B_2\setminus E}g\,\partial_{\eta_2}v\\
&=0.
\end{aligned}
\]
The interior term vanishes because $Lv=0$; the two boundary terms
vanish because $g|_{B_1}=0$ and $\partial_{\eta_2}v|_{B_2\setminus E}=0$,
respectively. 
By the same argument as in section~\ref{sec:forms}, $v$ can be approximated in the energy norm by compactly supported smooth functions vanishing on \(B_1\).
Hence \(Q(v,v)=0\).

It suffices to prove $I_{DN}(\Lambda)=0$. Suppose $I_{DN}(\Lambda)>0$, and choose
$w\in C_c^\infty(\Lambda\setminus B_1)$ with $\Q(w,w)<0$.
Then $\Q(v,w)=0$ since $w|_{B_1}=0$. The local boundary unique continuation argument
of Lemma~\ref{lem:boundary-uc}, applied away from $E$, implies
$\partial_{\eta_1}v\not\equiv0$ on $B_1\setminus E$.
Choose a cutoff $\chi\in C_c^\infty(B_1\setminus E)$ supported in one
boundary coordinate neighborhood with $\partial_{\eta_1}v\not\equiv0$
, and extend
$\chi\partial_{\eta_1}v$ to a function $h\in C_c^\infty(\Lambda)$
supported away from $B_2$. Such an extension is obtained in boundary
normal coordinates by setting
$h(y,t)=\chi(y)\partial_{\eta_1}v(y)\theta(t)$, where $\theta(0)=1$
and $\theta$ is a smooth cutoff supported in the coordinate neighborhood.
Integration by parts yields
\[
a:=\Q(v,h)=\int_{B_1}h\partial_{\eta_1}v
=\int_{B_1}\chi(\partial_{\eta_1}v)^2>0.
\]
Set $h_0=h-\Q(h,w)\Q(w,w)^{-1}w$. Then
$\Q(h_0,w)=0$ and $\Q(v,h_0)=a$, so, for sufficiently small $t>0$,
\[
\Q(v-th_0,w)=0,\qquad
\Q(v-th_0,v-th_0)=-2ta+t^2\Q(h_0,h_0)<0.
\]
Thus $w$ and $v-th_0$ are linearly independent and span a negative
definite subspace. Replacing $v$ by $v_R$ preserves this strict
negative definiteness for large $R$, giving two compactly supported
negative directions and contradicting $I_{NN}(\Lambda)=1$.
Hence $I_{DN}(\Lambda)=0$, and \eqref{eq:quarter-parity} gives
$I_N(\Sigma)=1$. By Theorem~\ref{thm:main}, applied with supporting
hyperplane $P_2$, $\Sigma$ is a half-catenoid. Consequently
$\Lambda$ is a quarter-catenoid.
\end{proof}
\begin{remark}
We do not use the argument of Section~\ref{sec:orthogonal}, although
it could also be adapted to this setting. Instead, the proof uses a
localized version of the perturbation underlying Tran's inequality
\cite{Tran} as stated in \cite[Theorem 2.3]{Chen}, as
the nonzero Dirichlet--Neumann Jacobi field $v$ on $\Lambda$ yields
an additional negative direction.
\end{remark}


\begin{thebibliography}{ABCS19}
\raggedright
\bibitem[CG26]{CG}
O.~Chodosh and M.~Gianocca,
\emph{Minimal hypersurfaces of Morse index one}, arXiv:2607.27444v1 (2026).
\url{https://arxiv.org/abs/2607.27444}.
\bibitem[Lu26]{Lu}
C.~Lu, \emph{Index-one capillary minimal hypersurfaces in $\R^n_+$},
arXiv:2610.24878v1 (2026).
\url{https://arxiv.org/abs/2610.24878}.
\bibitem[ABCS19]{ABCS}
L.~Ambrozio, R.~Buzano, A.~Carlotto, and B.~Sharp,
\emph{Bubbling analysis and geometric convergence results for free boundary minimal surfaces}, J. \'Ec. polytech. Math. \textbf{6} (2019), 621--664.
\url{https://doi.org/10.5802/jep.102}.
\bibitem[Che23]{Chen}
S.~Chen, \emph{On the index of minimal surfaces with free boundary in a
half-space}, J. Geom. Anal. \textbf{33} (2023), Paper No. 46.
\url{https://doi.org/10.1007/s12220-022-01106-8}.
\bibitem[And84]{Anderson}
M.~Anderson, \emph{The compactification of a minimal submanifold in
Euclidean space by the Gauss map}, manuscript (1984).
\url{https://www.math.stonybrook.edu/~anderson/compactif.pdf}.
\bibitem[Sch83]{Schoen}
R.~Schoen, \emph{Uniqueness, symmetry, and embeddedness of minimal surfaces},
J. Differential Geom. \textbf{18} (1983), 791--809.
\url{https://doi.org/10.4310/jdg/1214438183}.
\bibitem[TZ09]{TZ}
L.-F.~Tam and D.~Zhou, \emph{Stability properties for the higher dimensional catenoid in $\R^{n+1}$}, Proc. Amer. Math. Soc. \textbf{137} (2009), 3451--3461.
\url{https://arxiv.org/abs/0708.3310}.
\bibitem[Aro57]{Aronszajn}
N.~Aronszajn, \emph{A unique continuation theorem for solutions of elliptic partial differential equations or inequalities of second order}, J. Math. Pures Appl. (9) \textbf{36} (1957), 235--249.
\bibitem[GT01]{GT}
D.~Gilbarg and N.~S.~Trudinger, \emph{Elliptic Partial Differential Equations of Second Order}, Classics in Mathematics, Springer, 2001.
\bibitem[Tra20]{Tran}
H.~Tran, \emph{Index characterization for free boundary minimal surfaces},
Comm. Anal. Geom. \textbf{28} (2020), no.~1, 189--222.
\url{https://doi.org/10.4310/CAG.2020.v28.n1.a6}.
\bibitem[Lee13]{Lee}
J.~M.~Lee, \emph{Introduction to Smooth Manifolds}, 2nd ed.,
Graduate Texts in Mathematics, vol.~218, Springer, New York, 2013.
\url{https://doi.org/10.1007/978-1-4419-9982-5}.
\end{thebibliography}
\end{document}